\newtheorem{theorem}{Theorem}
\newtheorem{lemma}{Lemma}[section]
\newtheorem{proposition}{Proposition}[section]
\newtheorem{assumption}{Assumption}[section]
\newtheorem{definition}{Definition}[section]
\newtheorem{acknowledgment*}{Acknowledgment}
\newtheorem{remark}{Remark}[section]
\numberwithin{equation}{section}
\newcommand{\uu}{\underline{u}}
\newcommand{\urho}{\underline{\rho}}
\newcommand{\be}{\begin{equation}}
\newcommand{\ee}{\end{equation}}
\newcommand{\bd}{\begin{displaymath}}
\newcommand{\ed}{\end{displaymath}}
\newcommand{\eps}{\varepsilon}
\newcommand{\R}{\mathbb R}
\newcommand{\wrho}{\rho_2}
\newcommand{\uw}{\underline{w}}
\renewcommand{\vec}[1]{\boldsymbol{#1}}
\begin{document}
\Large
\begin{center}{\bf Chemotactic systems in the presence of   conflicts: a new functional inequality} \end{center}
\normalsize
\begin{center}   G. Wolansky \\
Department of Mathematics, Technion, Haifa 32000, israel\end{center}
\begin{abstract}
The evolution of a chemotactic system involving a  population of cells attracted to self-produced chemicals  is described by the Keller-Segel system. In
dimension 2, this system demonstrates a balance between the spreading effect of diffusion and the  concentration due to self-attraction. As a result, there exists  a critical "mass" (i.e. total cell's population) above which the solution of this system collapses in a finite time, while below this critical mass there is global existence in time. In particular, sub critical mass leads under certain additional conditions to the existence of steady states, corresponding to the solution of an elliptic Liouville equation.  The existence of this critical mass is related to a functional inequality known as the Moser-Trudinger inequality.

An extension of the Keller-Segel model to several cells populations was considered before in the literature. Here we review some of these results and, in particular, consider the case of conflict between two populations, that is, when  population one attracts  population two, while, at the same time, population two repels population one. This assumption leads to a new functional inequality which generalizes the Moser-Trudinger inequality. As an application of this inequality we derive sufficient conditions for the existence of steady states corresponding to solutions of an elliptic  Liouville system.
\end{abstract}
\section{Introduction}\label{intro}
In this   paper we study  a
 non-local elliptic Liouville  system  in $\Omega$ of the form
\be\label{Ils}\Delta u+M_1\frac{e^{\alpha u\pm\beta w}}{\int_\Omega e^{\alpha u\pm\beta w}}=0 \ \ \ ; \ \ \ \Delta w+M_2\frac{e^{-\gamma w+\beta u}}{\int_\Omega e^{-\gamma w+\beta u}} =0 \  , \  \ \ u=w=0 \ \ \text{on} \ \partial\Omega \   \ee
where $M_i>0$,  all constants $\alpha,\beta,\gamma$ are non-negative and $\Omega$ is a planar bounded domain.  We denote the $+\beta$ case above as the "conflict free" case, while the $-\beta$ is the "conflict" case. The reasoning behind this notation is explained below (see also  Section \ref{pre} and [\ref{[W1]}]).

Our motivation for studying this system is the  non-local parabolic-elliptic system
\be\label{Idelta2=0sys1}\frac{\partial\rho}{\partial t} = \Delta\rho + \nabla\cdot\left[\rho( \mp\beta \nabla w-\alpha\nabla u )\right]   \ \ ; \ \  \Delta u+\rho =0 \ \ ; \ \  \Delta w+M_2\frac{e^{\beta u-\gamma w}}{\int e^{\beta u-\gamma w}}=0    \ee
System (\ref{Idelta2=0sys1}) is defined  on $\Omega\times [0, \infty)$. The no-flux boundary condition for $\rho$ takes the form
 \be\label{Inoflus} \left(\nabla\rho- \alpha\rho\nabla u\mp\beta\rho\nabla w  \right)\cdot \vec{n}=0 \ \  \text{on} \ \partial\Omega\times (0,\infty) \ee
 where $n$ is the normal to $\partial \Omega$. In addition, $u=w=0$ on $\partial\Omega\times (0,\infty)$.
 \par
  In addition,  $\rho,u,w$ satisfy the initial conditions at $t=0$:
$u(,0)=u^0\in \mathbb{H}_0^1(\Omega):=\mathbb{H}_0^1 $, $w(,0)=w^0\in\mathbb{H}_0^1$
 and $\rho(,0):=\rho^0\in \mathbb{L}^1(\Omega):= \mathbb{L}^1$ where $\rho^0\geq 0$  on $\Omega$. In particular, the no-flux boundary condition (\ref{Inoflus}) implies, by a formal application of the divergence theorem, the  conservation of mass:
\be\label{Iid} \int_\Omega\rho(x,t)dx=\int_\Omega\rho^0(x)dx:= M_1>0 \ . \ee
The steady states of (\ref{Idelta2=0sys1}, \ref{Inoflus}, \ref{Iid}) are solutions of (\ref{Ils}) where $\rho=M_1\frac{e^{\alpha u-\beta w}}{\int_\Omega e^{\alpha u-\beta w}}$. \par
 The function $\rho$ corresponds, in the language of chemotaxis [\ref{[KS]},\ref{[H]}], to  the density  of a  population of  organisms (living cells, bacteria, slime molds or, perhaps, crowded human beings  ...) which evolve in time without multiplication and mortality. The individuals  of this population are moving  on the planar domain $\Omega$ under  a combination of random walk  and deterministic drift force along the gradient of  self produced chemicals $u$ and $w$.
 \par
 We remark at this point that the sign of the off diagonal terms in (\ref{Ils}) represents the interaction force between the populations. A positive off diagonal term for a given component represents that the  population corresponding to this component  is rejected from the other population. Thus, $\beta>0$ implies, due the second equation  in (\ref{Ils}), that the second population is rejected from the first one. The choice $+\beta$ in the first equation implies that the first population is rejected from the second one as well, so there is no conflict. On the other hand, the choice $-\beta$ implies that the first population is {\it attracted} to the second one (while the second one is still rejected by the first). This unhappy situation  is the origin of conflict of interests between the two populations.
\subsection{The conflict Free case}
In [\ref{[CSW]}], the general version of Liouville system (\ref{Ils})   was considered
\be\label{lsn} \Delta u_i + \frac{M_i}{\int_\Omega e^{\sum_{j=1}^na_{ij}u_j}}e^{\sum_{j=1}^na_{ij} u_j}=0 \ \ \ \   \  \ee
of $n\geq 2$ populations.
This system is defined on a bounded domain $\Omega\subset\R^2$, where $\ u_i=0 \ \text{on} \ \ \partial\Omega$ and
where $M_i>0$ are constants, $1\leq i\leq n $. The coefficients $\{a_{i,j}\}$ are assumes to be non-negative and  $a_{i,j}=a_{j,i}$.

Let $J\subset \{1, \ldots  n\}$, and
$$ \Lambda_J(M_1, \ldots M_n):= 4\pi\sum_{i\in J}M_i -\frac{1}{2}\sum_{i,j\in J}a_{i,j}M_iM_j  \ . $$
Theorem 1.1 in [\ref{[CSW]}] implies that a  sufficient condition for the existence of a solution of (\ref{lsn}) is the inequalities
\be\label{mineq} \Lambda_J(M_1, \ldots M_n)>0 \ \ \ \text{for any} \ \ J\subseteq \{1, \ldots n\}, \ J\not=\emptyset \ . \ee
Theorem 1.2 of the same paper  deals with  {\em radial} solutions of (\ref{lsn}) under the assumption that $\Omega$ is a disk
 in $\R^2$. It follows that in that case  the same result holds even if we give up the condition of non-negative {\em off diagonal} elements $a_{i,j}$, $i\not=j$.  The diagonal elements $a_{i,i}$ are still assumed to be non negative.
\par
Note that the conflict free case $+\beta$ in (\ref{Ils}) corresponds to a symmetric matrix $\{a_{i,j}\}$ in (\ref{lsn}) where $n=2$. However, the presence of the negative coefficient $-\gamma$ in the second equation of (\ref{Ils}) violates the non-negative diagonal assumption of Theorem 1.2 in [\ref{[CSW]}]. Still,  the proof in [\ref{[CSW]}]  can be extended to this case as well, provided   condition   (\ref{mineq} ) is replaced by
\be\label{mineqm} \Lambda_I(m_1, \ldots m_n)>0 \   \ \ ,  \ 0<m_i\leq M_i \  \text{for any}  \ i\in I:=\{1, \ldots n\}  \ . \ee
Note that (\ref{mineq}) implies (\ref{mineqm}) if the diagonal elements are  non-negative ($a_{i,i}\geq 0$).


An application of (\ref{mineqm}) to  (\ref{Ils}) implies  the condition
\be\label{dcond} 0<M_1<\frac{8\pi}{\alpha}, \ \text{and}\ \
 4\pi(M_1+m)-\frac{\alpha}{2}M_1^2+\frac{\gamma}{2}m^2-\beta M_1m>0 \ \ \forall m\in (0, M_2) \ . \ee
 If $\beta<0$ then the only condition is $M_1<8\pi/\alpha$ and $M_2<\infty$. If $\beta>0$ then  
we distinguish three cases:
\begin{description}
\item{a)}  $2\beta<\alpha$ \ \ $\Rightarrow$ $M_1\in(0,8\pi/\alpha)$, $M_2>0$.
\item{b)} $2\beta\geq \alpha$, $\gamma=0$ $\Rightarrow$
\be\label{onlycond}4\pi(M_1+M_2)-\frac{\alpha}{2}M_1^2+\frac{\gamma}{2}M_2^2-\beta M_1M_2>0\ee
is the only condition.
\item{c)} $2\beta\geq \alpha$, $\gamma>0$ $\Rightarrow$ either $M_1\in(0,\underline{M}_1)$ and $M_2>0$ or $M_1\in(\underline{M}_1, 8\pi/\alpha)$ and
(\ref{onlycond}). Here $M_1=\underline{M}_1$ is the vertical asymptote to the hyperbolic branch of
$$4\pi(M_1+M_2)-\frac{\alpha}{2}M_1^2+\frac{\gamma}{2}M_2^2-\beta M_1M_2=0$$
 in the positive quadrate $M_1, M_2>0$.
\end{description}
The domains in these three cases are demonstrated by the shaded areas in Fig. \ref{fig1}
\begin{figure}
            \begin{subfigmatrix}{3}
                        \subfigure[]{\includegraphics[width=0.32\textwidth]{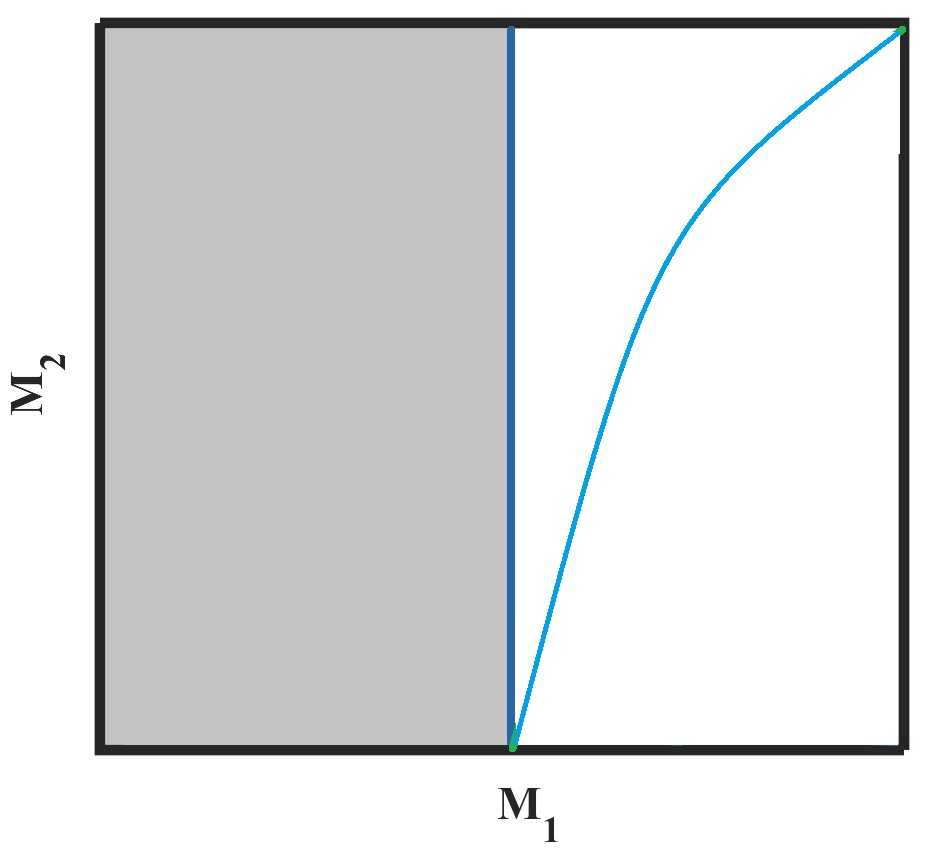}}
                        \subfigure[]{\includegraphics[width=0.32\textwidth]{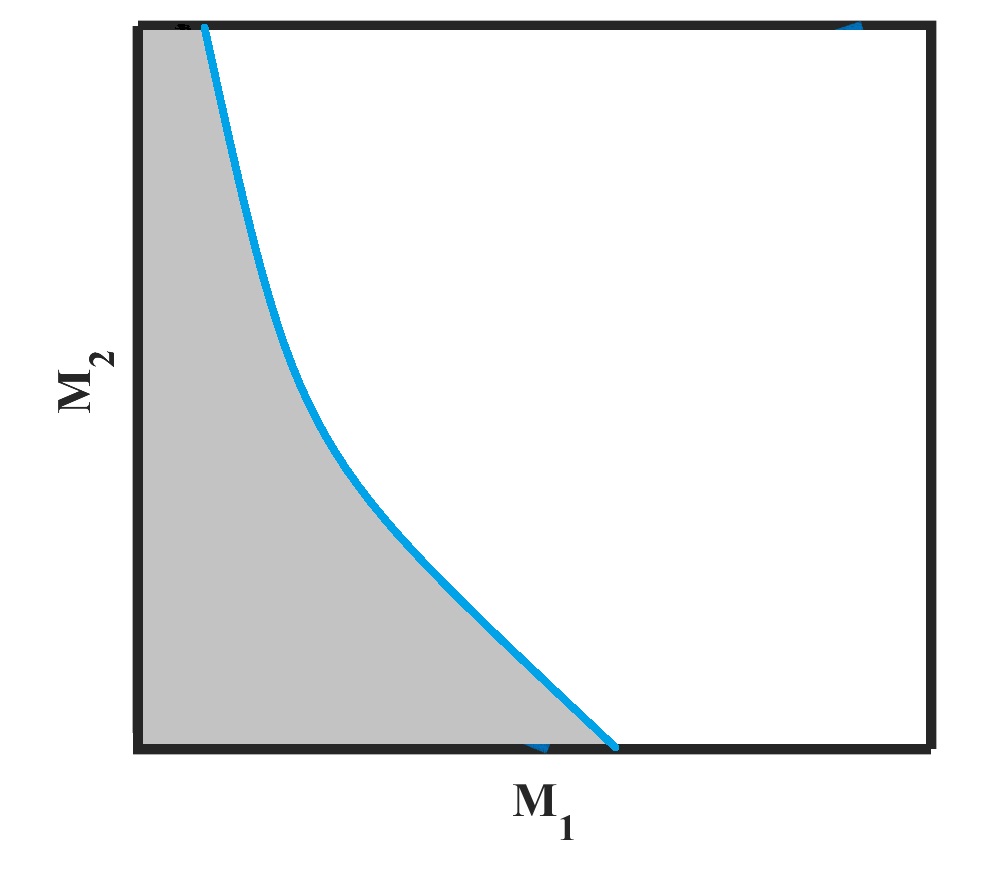}}
                         \subfigure[]{\includegraphics[width=0.32\textwidth]{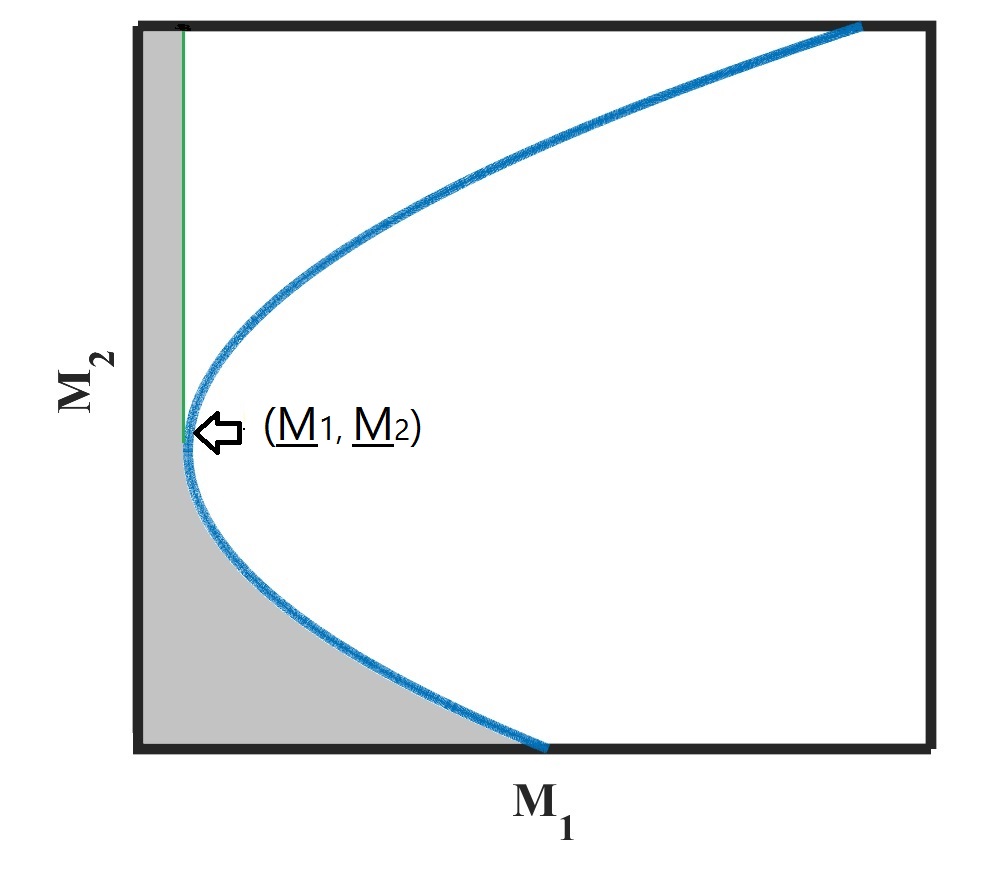}}
            \end{subfigmatrix}
           \caption{The 3 cases of conflict free chemotaxis for $\beta>0$: $\beta<\alpha/2$ (a),  $\beta>\alpha/2$, $\gamma=0$ (b) and  $\beta>\alpha/2$,   $\gamma>0$ (c). The vertical heavy line represents the critical mass $M_1=8\pi/\alpha$.}
            \label{fig1}
\end{figure}
\subsection{The case of conflict}\label{coc}
The main result of this paper is referred to the case of conflict, i.e  ($-\beta$) in (\ref{Ils}). We also assume $\beta>0$ from now on.
Let
$$ \Lambda(M_1, M_2):= 2(M_1-M_2)-\frac{\alpha M_1^2}{4\pi}+\frac{\beta M_1M_2}{2\pi} -\frac{\gamma M_2^2}{4\pi}\ . $$
\begin{theorem}\label{T1}
For any choice of $\alpha>0$, $\beta\geq 0,\gamma\geq 0$ there exists a  solution of (\ref{Ils}) in the conflict case for any $0<M_1< 8\pi/\alpha$, $0\leq M_2<\infty$.
  \par
  If, moreover,  $\beta > \alpha/2$  and  $\Omega$ is a disk in $\R^2$, then a radial solution exists if  
\begin{description}
\item{i)} \be\label{hypelips} \Lambda(M_1, M_2) >0 \ , \ee
and  $M_1<\underline{M}$ where  $\underline{M}$ is determined by the larger root of
\be\label{barMdef}\Lambda\left(\underline{M}, \frac{4\pi}{\gamma} \left(\frac{2\beta}{\alpha}-1\right)\right)=0\ee
if $\gamma>0$, and $\underline{M}=\infty$ if $\gamma=0$.
\item{ii)} If $(M_1,M_2)$ satisfies (i) then  the solution exists also for all $(M_1, M)$ where $M>M_2$.
\end{description}

\end{theorem}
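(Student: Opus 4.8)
The plan is to recast both parts of Theorem \ref{T1} as instances of the existence results for Liouville systems already available in the literature, after a sign change and an auxiliary argument that handles the new feature (the conflict term). For the first assertion — existence of a (not necessarily radial) solution whenever $0<M_1<8\pi/\alpha$ and $M_2<\infty$ — I would observe that the conflict system is the $-\beta$ case of \eqref{Ils}, so in the notation of \eqref{lsn} it corresponds to $n=2$ with off-diagonal coefficient $a_{12}=a_{21}=-\beta<0$ and diagonal entries $a_{11}=\alpha>0$, $a_{22}=-\gamma\le 0$. Thus neither Theorem 1.1 nor the radial Theorem 1.2 of [\ref{[CSW]}] applies directly. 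The idea is instead to solve the second equation for $w$ as a functional of $u$: for fixed $u\in\mathbb{H}_0^1$, $\Delta w = -M_2 e^{\beta u-\gamma w}/\int_\Omega e^{\beta u-\gamma w}$ is a scalar Liouville-type equation with a \emph{nonpositive} exponential coefficient in $w$ (because $-\gamma\le 0$), hence it has a unique solution $w=W[u]$ by a straightforward monotone/variational argument (the associated functional is strictly convex in $w$). Substituting $w=W[u]$ into the first equation reduces the problem to a single scalar equation $\Delta u + M_1 e^{\alpha u-\beta W[u]}/\int_\Omega e^{\alpha u-\beta W[u]}=0$, and I would then set up the Euler--Lagrange functional
\[
  I(u)=\tfrac12\int_\Omega|\nabla u|^2 - M_1\log\!\int_\Omega e^{\alpha u-\beta W[u]}\;+\;(\text{a term accounting for }w),
\]
showing it is coercive and weakly lower semicontinuous on $\mathbb{H}_0^1$ precisely when $M_1<8\pi/\alpha$, using the classical Moser--Trudinger inequality for the leading term and the sign of $\beta$ (the conflict term $-\beta W[u]$ enters with a favorable sign, since $W[u]$ is driven up by $u$) to absorb the cross terms. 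The minimizer is the desired solution; $M_2$ enters only through $W[u]$ and imposes no restriction, which is why the condition is just $M_2<\infty$.

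For the radial part (ii assuming i), with $\Omega$ a disk and $\beta>\alpha/2$, I would pass to radial coordinates and use the extension of Theorem 1.2 of [\ref{[CSW]}] to sign-indefinite coefficients noted in the excerpt (the remark that in the radial case one may drop non-negativity of the off-diagonal elements, keeping the diagonal ones non-negative), combined with the further extension to condition \eqref{mineqm} that allows the diagonal coefficient $-\gamma$ to be negative. Concretely, the relevant existence criterion becomes $\Lambda_I(m_1,m_2)>0$ for all $0<m_i\le M_i$, where here $\Lambda_I(m_1,m_2)=4\pi(m_1+m_2)-\tfrac{\alpha}{2}m_1^2+\tfrac{\gamma}{2}m_2^2-\beta m_1 m_2$ — note $2\pi\Lambda(m_1,m_2)$ equals this up to sign bookkeeping, so \eqref{hypelips} is exactly the positivity of the full-mass value. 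The work is then to show that the single inequality $\Lambda(M_1,M_2)>0$ together with $M_1<\underline M$ forces $\Lambda(m_1,m_2)>0$ for \emph{all} intermediate masses $0<m_1\le M_1$, $0<m_2\le M_2$. Since $\Lambda$ is a quadratic in $(m_1,m_2)$, its zero set is a conic; when $\beta>\alpha/2$ the quadratic form $-\tfrac{\alpha}{2}m_1^2-\beta m_1m_2+\tfrac{\gamma}{2}m_2^2$ is indefinite, so the zero set is a hyperbola, and I would analyze its branches in the open quadrant. The condition $M_1<\underline M$ with $\underline M$ defined by \eqref{barMdef} is precisely the statement that $M_1$ lies to the left of the vertical asymptote $m_1=\underline M$ of the relevant hyperbolic branch; on that side the region $\{\Lambda>0\}$ is "downward/leftward closed" in the quadrant, i.e. star-shaped toward the origin along coordinate directions, which yields the monotonicity $\Lambda(M_1,M_2)>0\Rightarrow\Lambda(m_1,m_2)>0$ for all smaller masses. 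Part (ii) — that the solution persists for all $M>M_2$ with $(M_1,M_2)$ fixed satisfying (i) — follows from the same picture: once $m_1\le M_1<\underline M$, increasing the second-component bound only enlarges the admissible set of $m_2$, because along the vertical line $m_1=$ const the function $m_2\mapsto\Lambda(m_1,m_2)=\tfrac{\gamma}{2}m_2^2+(2-\tfrac{\beta m_1}{2\pi})m_2+\cdots$ has been arranged (by $m_1<\underline M$) to stay positive for all $m_2>0$, so no new constraint appears.

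The main obstacle I anticipate is the second step: proving rigorously that the single endpoint inequality \eqref{hypelips} plus $M_1<\underline M$ upgrades to $\Lambda(m_1,m_2)>0$ on the whole rectangle $(0,M_1]\times(0,M_2]$, i.e. carefully characterizing the geometry of the hyperbola $\{\Lambda=0\}$ in the positive quadrant when $\beta>\alpha/2$ and identifying which branch is relevant. One must check that $\underline M$ from \eqref{barMdef} (defined via the particular second coordinate $\tfrac{4\pi}{\gamma}(\tfrac{2\beta}{\alpha}-1)$, which is the $m_1$-coordinate of the vertex of the hyperbola, or the value where the asymptote is attained) indeed coincides with the abscissa of the vertical asymptote, and that the sub-asymptote portion of $\{\Lambda>0\}$ behaves monotonically under shrinking of either coordinate. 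This is elementary conic-section bookkeeping but is where all the case distinctions live; once it is in place, the PDE existence is a direct citation of the (extended) Theorem 1.2 of [\ref{[CSW]}]. A secondary technical point is verifying that the scalar reduction $w=W[u]$ in the non-radial part is well-defined and smooth enough (continuity of $u\mapsto W[u]$ in the weak topology) to run the direct method — this is standard for Liouville equations with a sign-definite exponential coefficient, but should be stated.
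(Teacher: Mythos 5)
There is a genuine gap in the main (radial) part of your argument. You propose to cite the extension of Theorem 1.2 of [\ref{[CSW]}] with the criterion $\Lambda_I(m_1,m_2)>0$ for all $0<m_i\le M_i$, and you identify $2\pi\Lambda$ with $\Lambda_I$ ``up to sign bookkeeping.'' Neither step works. First, [\ref{[CSW]}] (and its extension \eqref{mineqm}) requires a \emph{symmetric} coefficient matrix; the conflict case has $a_{12}=-\beta$, $a_{21}=+\beta$, so the system is not of the form \eqref{lsn} and is not a gradient system with the CSW-type functional — this is precisely why the paper must prove a \emph{new} inequality rather than quote one. Second, the paper's $\Lambda(M_1,M_2)=2(M_1-M_2)-\frac{\alpha M_1^2}{4\pi}+\frac{\beta M_1M_2}{2\pi}-\frac{\gamma M_2^2}{4\pi}$ differs from $\Lambda_I$ in the signs of the $M_2$, the cross, and the $\gamma$ terms (the second population's entropy enters with a minus sign because the free energy is a sup over $\rho_2$, not an inf), so it is a different conic. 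Third, and fatally for your monotonicity claim: $\Lambda(M_1,m_2)\to 2M_1-\frac{\alpha M_1^2}{4\pi}<0$ as $m_2\to 0$ whenever $M_1>8\pi/\alpha$, so in the only interesting regime ($8\pi/\alpha<M_1<\underline M$) the condition ``$\Lambda(m_1,m_2)>0$ on the whole rectangle'' is \emph{never} satisfied — the set $\{\Lambda>0\}$ is not downward closed in $M_2$; on the contrary, Theorem \ref{T1}(ii) asserts persistence under \emph{increasing} $M_2$, which your picture gets backwards. The paper instead proves boundedness from below of $\bar F^{M_2}$ on the radial class (Theorem \ref{radbd}) by a bespoke iterative argument: truncate a putative blow-down sequence to mass $s-\delta$, rescale, control the $w$-contribution via the ODE analysis of Lemma \ref{lemw} on the annulus, and descend in steps of $\delta$ to reach the contradiction at mass $8\pi/\alpha$; part (ii) follows from the elementary bound $(M-M_2)\ln\int e^{-\gamma w-\beta\Delta^{-1}\rho}\ge(M-M_2)\ln\int e^{-\gamma w}\ge-\eps\|\nabla w\|_2^2-C(\eps)$. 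Existence of a minimizer then comes from Proposition \ref{pro1}, not from [\ref{[CSW]}].

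Your treatment of the first assertion ($M_1<8\pi/\alpha$, arbitrary $M_2$) is essentially sound in spirit and close to the paper's: the paper observes that $\bar H^{M_2}_\gamma(\rho)$ is bounded below uniformly in $\rho\in\Gamma_{M_1}$ (since $\Delta^{-1}\rho\le 0$), so $\bar F^{M_2}=F+\bar H^{M_2}_\gamma$ inherits the logarithmic HLS bound for $F$, and the minimizer is produced by the direct method in the pair $(\rho,w)$ rather than through a scalar reduction $w=W[u]$. If you keep your reduction, you would still need to verify that critical points of the reduced functional in $u$ correspond to solutions of the coupled system (the ``term accounting for $w$'' you leave unspecified carries the whole burden there), which the paper's formulation via $F^{M_2}$ and Proposition \ref{propcrit} handles automatically.
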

The proof of Theorem \ref{T1} follows from Proposition \ref{propcrit}, Proposition \ref{pro1} and Theorem~\ref{radbd} in section \ref{Mainresults}.
 In Fig.  \ref{fig2} we sketch in gray  the solvability domain of (\ref{Ils}) in the   $(M_1, M_2)$ parameters for $\beta > \alpha/2$:
 \begin{figure}[H]
            \begin{subfigmatrix}{3}
                        \subfigure[]{\includegraphics[width=0.42\textwidth]{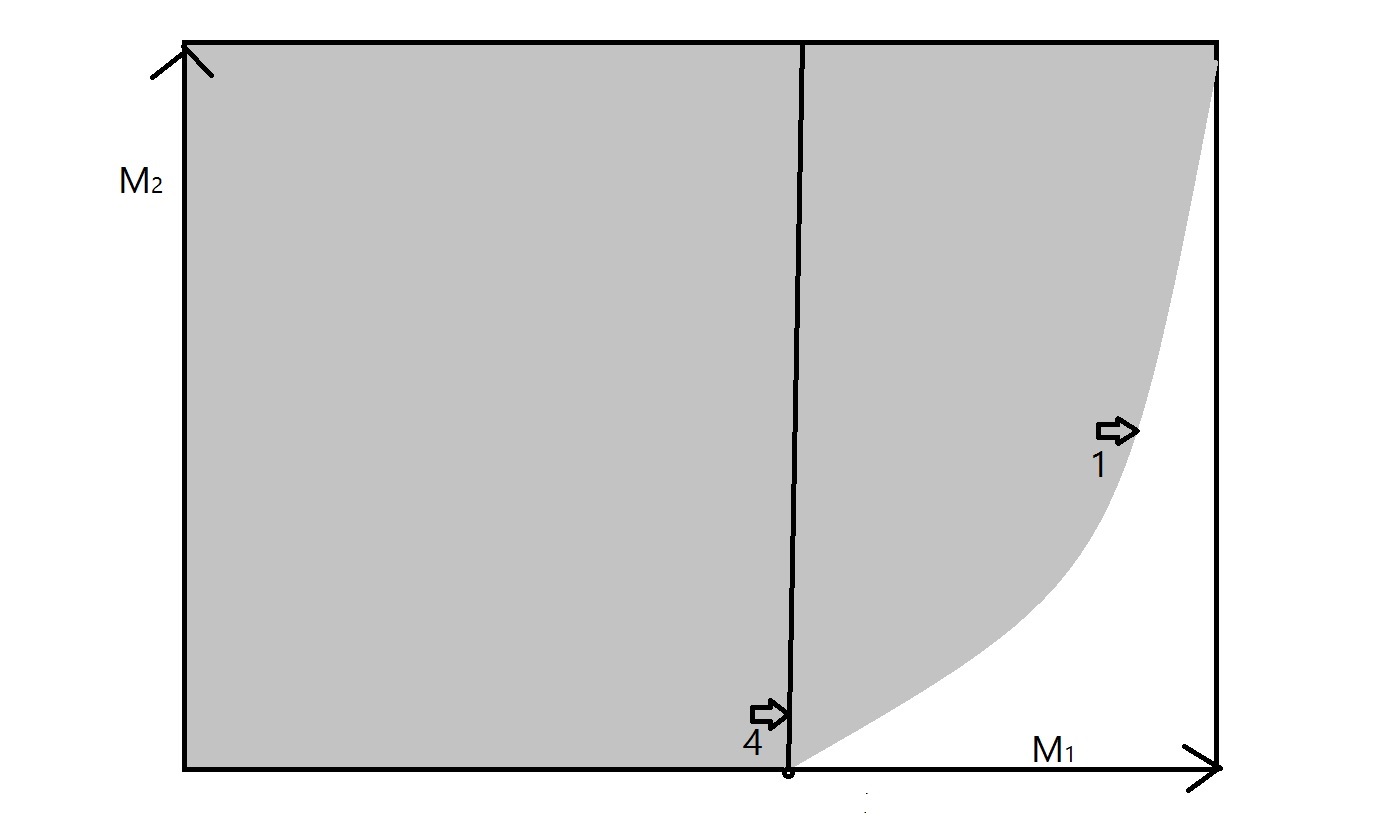}}
                        \subfigure[]{\includegraphics[width=0.42\textwidth]{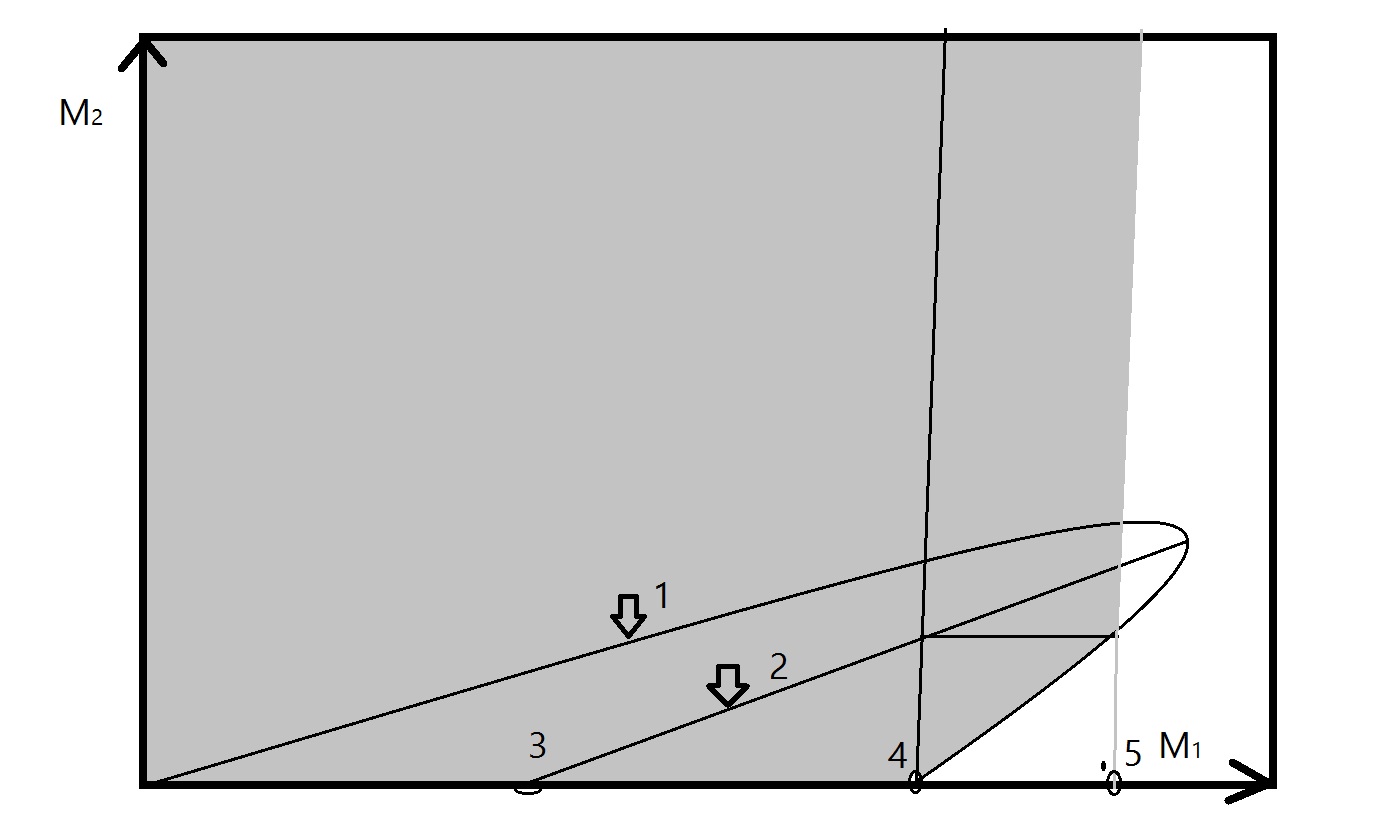}}
            \end{subfigmatrix}
           \caption{The 2 cases of conflict for $\beta>\alpha/2$:   $\gamma=0$ (a), and  $\gamma>0$ (b). \newline
           (1): The curve $\Lambda=0$, (2):$\beta M_1-\gamma M_2=4\pi$, (3): $M_1= 4\pi/\beta$, (4): $M_1=8\pi/\alpha$, (5): $M_1=\underline{M}$}
            \label{fig2}
\end{figure}
\begin{description}
\item {[a]} $\gamma=0$. In that case the domain of solvability coincide with $\Lambda>0$ which is  below the parabola.
\item{[b]}  $\gamma>0$. Here the curve  $\Lambda=0$ is  a quadratic curve (either an ellipse or an hyperbola)  and the solvability domain is bounded by from the right by  $\underline{M}$ on the $M_1$ axis.
\end{description}
\begin{remark} We do not know if the conditions of Theorem \ref{T1} are optimal. However, Theorem~\ref{blow} and the remark below Theorem \ref{radbd} in Section \ref{Mainresults} suggest that this may be  the case, at least if $\gamma=0$.
\end{remark}
\subsection{Structure of the paper} In section \ref{Rev} we review the Free energy method for chemotactic system of a single component, the connection with Moser-Trudinger inequality and its relation with the parabolic and elliptic Liouville equation.
\par
In section \ref{pre} we extend the discussion to chemotactic systems of two components, consider 3 limit cases and the associated Free Energies. The elliptic Liouville systems  for two components are derived in both conflict/noconclict cases.
\par
From  section \ref{seccoc} forward we concentrate in the case of conflict for two component chemotaxis. We discuss the solution of the Liouville system
as steady states of the chemotactic system and its stability under the 3 limit cases. In sections  \ref{secobj}, \ref{Mainresults} we describe the main objectives of this paper and its main results summarized in Theorems \ref{blow} and \ref{radbd}. The most technical part of this paper is the proof of Theorem  \ref{radbd}, given in Section \ref{secprof}.

\subsection{Notations and standing assumptions}\label{notations}
\begin{enumerate}
\item $\Omega\subset \R^2$ is an open, bounded domain.
\item $\partial\Omega$ is the boundary of $\Omega$. We assume that $\partial\Omega$ is  $C^2$ regular.
\item $u\in \mathbb{H}_0^1(\Omega)$ iff $\int_\Omega |u|^2+\int_\Omega |\nabla u|^2 <\infty$ and admits zero trace on $\partial\Omega$.
\item $ \Gamma_M:= \{ \rho\in \mathbb{L}^1(\Omega), \ \rho\geq 0 \ \text{a.e. \ on} \ \Omega, \ \ \int_\Omega\rho\ln\rho < \infty \ , \int_\Omega\rho=M \}$.
    \item $\Delta^{-1}$ is the Green function of the Dirichlet Laplacian on $\Omega$.
\end{enumerate}
\section{Review of Chemotactic systems and Free energy}\label{Rev}
 Define  $F^{M}$ on $\Gamma_{M_1}\times \mathbb{H}_0^1$ as
 $$ F^{M}(\rho,w):= \int_\Omega\rho\ln\rho +\frac{\alpha}{2}\int_{\Omega} \rho \Delta^{-1}\rho +\left[ \frac{\gamma}{2}\int_\Omega|\nabla w|^2 + M\ln\left(\int e^{-\gamma w-\beta \Delta^{-1}(\rho)}\right) \right] \ . $$
 Noting $u=-\Delta^{-1}\rho$,
 it follows that   (\ref{Idelta2=0sys1}) can be written as
 \be\label{IgradF}\frac{\partial\rho}{\partial t} =\nabla\cdot\left( \rho\nabla\frac{\delta F^{M_2}}{\delta \rho}\right) \ \ , \ \ \frac{\delta F^{M_2}}{\delta w}=0 \  \ \text{on} \ \Omega\times (0, \infty) \ee
 while (\ref{Inoflus}) is equivalent to $\nabla\frac{\delta F^{M_2}}{\delta \rho}\cdot \vec{n}=0$ on $\partial\Omega$.
A formal integration by parts yields
 \be\label{Imonotone}\frac{d}{dt} F^{M_2}(\rho(,t), w(,t))= -\int\rho\left|\nabla\frac{\delta F^{M_2}}{\delta\rho}\right|^2\ee
so $F^{M_2}$ is monotone non-increasing along solutions of (\ref{Idelta2=0sys1}).

 From the representation (\ref{IgradF}) it follows that any solution of (\ref{Ils}) corresponds to a critical point of $F^{M_2}$ on $\Gamma_{M_1}\times \mathbb{H}_0^1$. In particular, the monotonicity (\ref{Imonotone}) suggests that  local minimizers of $F^{M_2}$ on this domain correspond to  stable steady states of (\ref{Idelta2=0sys1}). Thus, the question regarding the    bound from below  of $F^{M_2}$ on $\Gamma_{M_1}\times \mathbb{H}_0^1$  is interesting in that respect, as it is a necessary condition for the existence of a global minimizer on this domain. This global minimizer is, evidently, a critical point, and thus a steady state of (\ref{Idelta2=0sys1}).
 \par
 If we substitute $\gamma=\beta=0$ in $F^M$ we get, up to an irrelevant constant,   the {\it Free Energy functional}
 \be\label{IF} \rho\in\Gamma_M \mapsto F(\rho):= \int_\Omega\rho\ln\rho +\frac{\alpha}{2}\int_\Omega \rho \Delta^{-1}(\rho) \ . \ee
 This functional  is monotone non-increasing along solutions  of   the parabolic-elliptic Keller-Segel system for chemotaxis  of a single component [\ref{[S]},\ref{[W1]},\ref{[W2]}, \ref{[BCC]}...] (see also [\ref{[W3]},\ref{[W4]}] for application to self-gravitating systems)
 \be\label{Isingle}\frac{\partial\rho}{\partial t} = \Delta\rho - \nabla\cdot\left[\rho(\alpha\nabla u )\right]   \ \ ; \ \  \Delta u+\rho =0  \ \ ; \ \ \int_\Omega\rho^0=M \ .  \ee
 Note that (\ref{Isingle}) is obtained from the substitution $\gamma=\beta=0$ in (\ref{Idelta2=0sys1}). This   can be written as
 \be\label{IgradF0}\frac{\partial\rho}{\partial t} =\nabla\cdot\left( \rho\nabla\frac{\delta F}{\delta \rho}\right) \ \text{on} \ \Gamma_M\   .  \ee
 The bound from below of $F$ on $\Gamma_M$ for $M\leq 8\pi/\alpha$ follows from the logarithmic HLS inequality [\ref{[B]}, \ref{[CL]}]:
    $$\forall \rho\in\mathbb{L}\ln\mathbb{L}(D) \ , \ \int_{D}|\rho|\ln|\rho| + (4\pi)^{-1}\int_{D}\int_{D}\rho(x)\ln|x-y|\rho(y)dxdy >-C(D) \  \ \ ,  \ \  \|\rho\|_1=1$$
    for functions in a  two dimensional bounded domain $D$.  Using  scaling and  taking into account that $\Delta^{-1}(x,y)\approx(2\pi)^{-1}\ln|x-y|$, up to lower order terms, imply the bound from below on $\Gamma_M$ for $M\leq 8\pi/\alpha$.

  This  is a key inequality for the proof of global existence of (\ref{Isingle}) for $M<8\pi/\alpha$ as well as the existence of solution to the nonlocal Liouville equation
 \be\label{Iles} \Delta u + \frac{M}{\int e^{\alpha u}}e^{\alpha u}=0, \ \ M<8\pi/\alpha \ee
 in a bounded domain  $\Omega$ [\ref{[W2]}, \ref{[KS1]},\ref{[L]}, \ref{[Wa]}].
 \par
 The parabolic elliptic Keller-Segel (\ref{Isingle}) is a limiting case of the parabolic parabolic system  [\ref{[CC]}]
 \be\label{Isinglee}\delta\frac{\partial\rho}{\partial t} = \Delta\rho - \nabla\cdot\left[\rho(\alpha\nabla u )\right]   \ \ ; \ \  \eps\frac{\partial u}{\partial t}=\Delta u+\rho =0  \ \ , \int_\Omega\rho^0=M  , \ \ u^0\in \mathbb{H}_0^1 \ee
 where $\delta=1$ and $\eps=0$. Another, less known limit of (\ref{Isinglee}) [\ref{[W2]}, \ref{[KS1]}]  is $\eps=1, \delta=0$:
 \be\label{Iless} \frac{\partial u}{\partial t}=\Delta u +\frac{M}{\int e^{\alpha u}}e^{\alpha u} \ . \ee
 We observe that (\ref{Iless}) is itself a gradient descend system on $\mathbb{H}_0^1$ of the form
 $$ \frac{\partial u}{\partial t}=-\alpha^{-1}\frac{\delta \overline{H}^M}{\delta u}$$
 where
 $$ u\in \mathbb{H}_0^1 \mapsto \overline{H}_\alpha^M(u):= \frac{\alpha}{2}\int_\Omega|\nabla u|^2 -M\ln\left(\int_\Omega e^{\alpha u}\right) \ . $$
 A simple scaling shows that the bound from below  of $\overline{H}^M$ on $\mathbb{H}_0^1$ where $M\leq 8\pi/\alpha$ follows from the Moser-Trudinger inequality
 \be\label{trudin}\frac{1}{2}\int_\Omega |\nabla u|^2 -8\pi\ln\left(\int_\Omega e^u\right) > -C\ee
 for any $u\in \mathbb{H}_0^1$ [\ref{[RS]}, \ref{[SW]}...]. This gives an alternative proof for the existence of solution to (\ref{Iles}) for $M<8\pi/\alpha$, as well as the global (in time) existence of (\ref{Iless}) under the same condition [\ref{[Bi]}].
 \par
 Motivated by the above, we consider in this paper the condition for bound from below of the functional $F^{M_2}$ on $\Gamma_{M_1}\times \mathbb{H}_0^1$. Note that for $M_2=0$ and $\gamma=0$,  $F^{M_2}$ is just the Free Energy $F$ (\ref{IF}).
 \par
 It follows, then, that  a new inequality for $F^{M_2}>-C$ on $\Gamma_{M_1}\times \mathbb{H}_0^1$ is a generalization of the Logarithmic HLS inequality for the case $M_2=0$. Note also that if $\gamma=0$, $M>0$ then the  last two nonzero terms of $F^M(-\Delta w)$ is, by integration by parts, just
 $$-\frac{\alpha}{2}\int |\nabla w|^2  + M\ln\left(\int e^{\beta w}\right)  $$
 which is related to the Moser-Trudinger inequality (with opposite sign, however).
 In fact, it is known that the Moser-Trudinger and Logarithmic HLS inequalities are equivalent. To see this, consider
  $$ (\rho,u)\in\Gamma_{M_1}\times \mathbb{H}_0^1 \mapsto H(\rho,u):= \int_\Omega \rho\ln\rho +\frac{\alpha}{2}\int_\Omega |\nabla u|^2 -\alpha\int_\Omega \rho u \  \ .  $$
and note that
 $$ F(\rho)=\inf_{u\in \mathbb{H}_0^1} H(\rho,u) \ \ ; \ \ \  \overline{H}_\alpha^M(u):= \inf_{\rho\in\Gamma_M} H(\rho,u) \ , $$
 so both logarithmic HLS and Moser-Trudinger inequalities follow from the bound $H(\rho,u)>-C$ for $(\rho,u)\in \Gamma_{8\pi/\alpha}\times \mathbb{H}_0^1$.
 \par
  It is also interesting to note that  $H$ induces a gradient descend flow for  the parabolic-parabolic Keller-Segel equation (\ref{Isinglee})
  via
  $$ \delta\frac{\partial \rho}{\partial t}=\nabla\cdot\left(\rho \nabla\frac{\delta H}{\delta \rho}\right) \ \ ; \ \ \
 \eps\frac{\partial u}{\partial t}=-\alpha^{-1}\frac{\delta H}{\delta u}$$
 and that (\ref{Isingle}) (resp. (\ref{Iless})) are singular limits of (\ref{Isinglee}) for $\eps=0$ (resp. $\delta=0$).

\section{Multi-Component Chemotactic Systems}\label{pre}
The general system of Chemotaxis for two components is a special case of the  system of $n$ populations [\ref{[W1]}]:
\be\delta_i\label{2sys}\frac{\partial\rho_i}{\partial t} = \sigma_i\Delta\rho_i + \nabla\cdot\left[\rho_i(a_{ii}\nabla u_i +  a_{ij} \nabla u_j)\right]   \ee
where $(i,j)\in \{1,2\}$ $i\not= j$, and
\be\label{sens} \eps\frac{\partial u_i}{\partial t} = b_i\Delta u_i+\rho_i  \ , \ i=1,2 \ee
where $\sigma_i>0, b_i>0$ are the diffusion coefficients, $\delta_i, \eps>0$ and  $a_{i,j}$ are constants.
Eq. (\ref{2sys}, \ref{sens}) are defined on $\Omega\times [0,\infty)$ where $\Omega\subset\R^2$, $u_i$ are subjected to Dirichlet boundary condition $u_i=0$ on $\partial\Omega\times [0,\infty)$ and initial data
\be\label{idu} u_i(,0)=u^0_i\in \mathbb{H}_0^1(\Omega) \ . \ee
 $\rho_i$ satisfy the no-flux boundary conditions
\be\label{noflux} \vec{n}\cdot\left\{\sigma_i\nabla\rho_i + \left[\rho_1(a_{ii}\nabla u_i +  a_{ij} \nabla u_j)\right]\right\}=
0\ee
 on $\partial\Omega\times[0,\infty)$, where $\vec{n}$ is the normal to $\partial\Omega$. In addition,  $\rho_i$ satisfy the initial conditions at $t=0$:
$\rho_i(, 0)=\rho^0_i$,
where $\rho^0_i\in \mathbb{L}^1(\Omega)$ and $\rho^0_i\geq 0$  on $\Omega$. In particular, the no-flux boundary conditions imply, by a formal application of the divergence theorem, the  conservation of mass:
\be\label{id} \int_\Omega\rho_i(x,t)dx=\int_\Omega\rho^0_i(x)dx:= M_i \ . \ee
\par
The functions $\rho_i$ correspond to the densities  of the {\it two}  populations of  organisms  which evolve with time without multiplication and mortality. The individuals  of these populations are moving  on the planar domain $\Omega$ by a combination of random walk (corresponding to the diffusion coefficients $\sigma_i$), and deterministic drift forces along the gradient of  self produced chemicals $u_i$. \par
 Five of the constants in (\ref{2sys}) can be eliminated  by scaling   $\rho_1$, $\wrho$, $u_1, u_2$ and the time $t$. In particular, we can assume, without loosing  generality, that  $\sigma_1=\sigma_2=b_1=b_2=1$ and that  $a_{12}=\pm a_{21}:= \beta$.     Let  $a_{11}:= -\alpha, a_{22}=\gamma$.

\begin{assumption}
$\alpha\geq 0$ (self-attractive first population), $\gamma\geq 0$ (self repulsive second population) as well as $\beta>0$ (first population is rejected by the second one).
\end{assumption}
 We get
 \be\label{sys1}\delta_1\frac{\partial\rho_1}{\partial t} = \Delta\rho_1 + \nabla\cdot\left[\rho_1( \beta \nabla u_2-\alpha\nabla u_1 )\right]   \ \ \ ; \ \ \
 \delta_2\frac{\partial\wrho}{\partial t} =\Delta\wrho +\theta \nabla\cdot\left[\wrho(  \beta \nabla u_1 +\theta\gamma\nabla u_2)\right] \ee
 \be\label{sens0}\eps\frac{\partial u_i}{\partial t}=\Delta u_i+\rho_i  \ , \ \ i=1,2 \ . \ee

  Here
 $\theta\in\{-1,1\}$ corresponds  to the choice of sign in $a_{21}=\pm\beta$.  The case $\theta=1$ is  the {\it conflict free} case studied in [\ref{[W1]}].
 In that case  the second  population is rejected by the first  one, so both population has the same attitude to each other (mutual rejection, in that case).
\par
Let us define
  $H_\theta: \Gamma_{M_1}\times\Gamma_{M_2}\times (\mathbb{H}_0^1)^2\rightarrow \R\cup\{\infty\}$: \\
$H_\theta(\rho_1, \rho_2, u_1, u_2):=$
 \begin{multline} \int\rho_1\ln\rho_1 +\theta \int\rho_2\ln\rho_2 + \\  \frac{\alpha}{2}\left( \int_\Omega| \nabla u_1|^2-2\int_\Omega u_1 \rho_1 \right) - \frac{1}{2}\theta\gamma\left( \int_\Omega |\nabla u_2|^2-2\int_\Omega u_2 \rho_2\right) \\ -\beta\left(\int_\Omega \nabla u_1\cdot\nabla u_2 -\int_\Omega \rho_1 u_2 - \rho_2 u_1 \right)\end{multline}
 The system (\ref{sys1}, \ref{sens0})  subject to initial data  (\ref{idu}, \ref{id}) takes the form
 \be\label{grs}\delta_1\frac{\partial\rho_1}{\partial t} = \nabla\cdot\left(\rho_1\nabla\frac{\delta H_\theta}{\delta\rho_1}\right) \ \ \  ; \ \ \   \delta_2 \frac{\partial\wrho}{\partial t} =\theta \nabla\cdot\left(\wrho\nabla\frac{\delta H_\theta}{\delta\wrho}\right) \ . \ee
 \be\label{qrs} \eps\frac{\partial u_1}{\partial t} = \frac{1}{\beta^2+\alpha\gamma\theta}\left( \beta\frac{\delta H_\theta}{\delta u_2}-
 \theta\gamma\frac{\delta H_\theta}{\delta u_1}\right)
 \ \ \ , \ \  \eps\frac{\partial u_2}{\partial t} =\frac{1}{\beta^2+\alpha\gamma\theta}\left( \beta\frac{\delta H_\theta}{\delta u_1}
 +\alpha \frac{\delta H_\theta}{\delta u_2}\right) \ . \ee


 \subsection{Limit cases}
 \begin{description}
 \item{i)}  The limit $\eps=0$ of system  (\ref{grs}, \ref{qrs}) is reduced into the parabolic-elliptic system (\ref{sys1}) where (\ref{sens0}) is replaced by
 \be\label{eps=0sens0}\Delta u_i+\rho_i =0  \ , \ \ i=1,2 \ . \ee
 If we substitute $u_i=-\Delta^{-1}(\rho_i)$  in $H_\theta$ we get
 \be\label{uHdef}\underline{H}_\theta(\rho_1, \rho_2):= \int\rho_1\ln\rho_1 +\theta \int\rho_2\ln\rho_2 + \frac{\alpha}{2}\int\rho_1 \Delta^{-1}(\rho_1) - \frac{\theta\gamma}{2}\int\rho_2 \Delta^{-1}(\rho_2)-\beta\int\rho_2\Delta^{-1}(\rho_1) \ . \ee
Then,  (\ref{sys1},\ref{eps=0sens0}) takes the form
\be\label{grs1}\delta_1\frac{\partial\rho_1}{\partial t} = \nabla\cdot\left(\rho_1\nabla\frac{\delta \underline{H}_\theta}{\delta\rho_1}\right) \ \ \  ; \ \ \   \delta_2\frac{\partial\rho_2}{\partial t} =\theta \nabla\cdot\left(\wrho\nabla\frac{\delta \underline{H}_\theta}{\delta\wrho}\right) \ . \ee
\item{ii)} The limit $\eps=\delta_2=0$, $\delta_1=1$.  Substitute $\delta_2=0$ in (\ref{sys1}) and integrate
to obtain $\rho_2=M_2e^{-\theta\beta u_1-\gamma u_2}/\int e^{-\theta\beta u_1+\gamma u_2}$. Hence
\be\label{delta2=0sys1}\frac{\partial\rho_1}{\partial t} = \Delta\rho_1 + \nabla\cdot\left[\rho_1( \beta \nabla u_2-\alpha\nabla u_1 )\right]   \ \ ; \ \  \Delta u_1+\rho_1 =0 \ \ ; \ \  \Delta u_2+M_2\frac{e^{-\theta\beta u_1-\gamma u_2}}{\int e^{-\theta\beta u_1-\gamma u_2}}=0   \ . \ee
 Let us define now $F_\theta^{M}$ on $\Gamma_{M_1}\times \mathbb{H}_0^1$ as
 $$ F_\theta^{M}(\rho,w):= \int\rho\ln\rho +\frac{\alpha}{2}\int \rho \Delta^{-1}(\rho) -\theta\left[ \frac{\gamma}{2}\int|\nabla w|^2 + M\ln\left(\int e^{-\gamma w+\theta\beta \Delta^{-1}(\rho)}\right) \right]$$
 Then, with $\rho_1:=\rho$, $u_2:=w$,  (\ref{delta2=0sys1}) can be written as
   \be\label{FM2}\frac{\partial\rho}{\partial t} =\nabla\cdot \rho\nabla\left(\frac{\delta F_\theta^{M_2}}{\delta\rho}\right) \ \ \ ; \ \  \frac{\delta F_\theta^{M_2}}{\delta w}=0 \ . \ee
\item{iii)} The limit  $\delta_1=\delta_2=0$ for  (\ref{grs}, \ref{qrs}) is reduced into
\be\label{qrsexp}\frac{\partial u_1}{\partial t}=\Delta u_1+M_1\frac{e^{\alpha u_1-\beta u_2}}{\int_\Omega e^{\alpha u_1-\beta u_2}} \ \ \ ; \ \ \ \frac{\partial u_2}{\partial t}=\Delta u_2 + M_2\frac{e^{-\gamma u_2-\theta\beta u_1}}{\int_\Omega e^{-\gamma u_2-\theta\beta u_1}}  \ . \ee

 If we substitute (\ref{ls0}) in $H_\theta$ and apply integration by parts, we get
 $\overline{H}^{M_1,M_2}_\theta(u_1, u_2)+M_1\ln M_1 +\theta M_2\ln M_2$ where
 $\overline{H}^{M_1,M_2}_\theta(u_1, u_2):=$
  \be\label{Hdef}\frac{\alpha}{2}\int_\Omega| \nabla u_1|^2 - \frac{1}{2}\theta\gamma \int_\Omega |\nabla u_2|^2  -\beta\int_\Omega \nabla u_1\cdot\nabla u_2
 -M_1\ln\left( \int e^{\alpha u_1-\beta u_2}\right)-\theta M_2\ln\left( \int e^{-\gamma u_2-\theta\beta u_1}\right)  \ . \ee
  Then (\ref{qrsexp}) takes the form
 \be\label{qrs1}  \eps\frac{\partial u_1}{\partial t} = \frac{1}{\beta^2+\alpha\gamma\theta}\left( \beta\frac{\delta \overline{H}_\theta}{\delta u_2}-
 \theta\gamma\frac{\delta \overline{H}_\theta}{\delta u_1}\right)
 \ \ \ , \ \  \eps\frac{\partial u_2}{\partial t} =\frac{1}{\beta^2+\alpha\gamma\theta}\left( \beta\frac{\delta \overline{H}_\theta}{\delta u_1}
 +\alpha \frac{\delta \overline{H}_\theta}{\delta u_2}\right) \ . \ee
\end{description}
\subsection{Steady states}
Any critical point of $H_\theta$ in $\Gamma_{M_1}\times \Gamma_{M_2}\times (\mathbb{H}_0^1)^2$ is also an equilibrium solution of (\ref{sys1}, \ref{sens0}).
The variation of $H_\theta$ with respect to $\rho_i$ yields
  $$\ln\rho_1-\alpha u_1+\beta u_2=\lambda_1 \ \ \ ; \ \ \ \theta( \ln\rho_2+\gamma u_2)+\beta u_1=\lambda_2$$
 where $\lambda_i$ are the Lagrange multipliers associated with the constraints $\int\rho_i=M_i$.
 Hence
  \be\label{ls0}\rho_1=M_1\frac{e^{\alpha u_1-\beta u_2}}{\int_\Omega e^{\alpha u_1-\beta u_2}} \ \ \ ; \ \ \ \rho_2=M_2\frac{e^{-\gamma u_2-\theta\beta u_1}}{\int_\Omega e^{-\gamma u_2-\theta\beta u_1}}  \ . \ee
 The variation of $H_\theta$ with respect to $(u_1,u_2)\in (\mathbb{H}_0^1)^2$ yields
 \be\label{us} \rho_i=-\Delta u_i \ \ .   \ee
 Combining (\ref{ls0}, \ref{us}) together we obtain the
 {\it Liouville} type system
 \be\label{ls}\Delta u_1+M_1\frac{e^{\alpha u_1-\beta u_2}}{\int_\Omega e^{\alpha u_1-\beta u_2}}=0 \ \ \ ; \ \ \ \Delta u_2+M_2\frac{e^{-\gamma u_2-\theta\beta u_1}}{\int_\Omega e^{-\gamma u_2-\theta\beta u_1}} =0 \  , \  \ \ u_1=u_2=0 \ \ \text{on} \ \partial\Omega \ .  \ee
 It can be verified directly that a solution $(\rho_i,u_i)$ of (\ref{us}, \ref{ls}) yields a steady state solution of (\ref{sys1}, \ref{sens0}).
\begin{proposition}\label{propcrit} $(u_1,u_2)$ is a solution of the Liouville system (\ref{ls}) iff either $(u_1,u_2)$ is a critical point of  $\overline{H}^{M_1,M_2}_\theta$ in $(\mathbb{H}_0^1)^2$ or $(\rho_1,\rho_2)$, $\rho_i=-\Delta u_i$ is a critical point of $\underline{H}_\theta$ in $\Gamma_{M_1}\times \Gamma_{M_2}$ or $(\rho_1, u_2)$ is a critical point of $F_\theta^{M_2}$ in $\Gamma_{M_1}\times \mathbb{H}_0^1$.
\end{proposition}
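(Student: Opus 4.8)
The plan is to prove the three equivalences ``$(u_1,u_2)$ solves (\ref{ls})'' $\Longleftrightarrow$ ``the corresponding object is a critical point of $\overline H^{M_1,M_2}_\theta$ (resp.\ of $\underline H_\theta$, resp.\ of $F_\theta^{M_2}$)'' one at a time; the ``iff either $\dots$ or $\dots$'' formulation in the statement then follows automatically, since a disjunction of statements each equivalent to (\ref{ls}) is again equivalent to (\ref{ls}). In the forward direction each of the three is just the computation of a first variation, with Lagrange multipliers $\lambda_i$ for the mass constraints $\int_\Omega\rho_i=M_i$ whenever the underlying domain is a shell $\Gamma_{M_i}$; the glue between the three pictures is, throughout, the identification $u_i=-\Delta^{-1}\rho_i$, equivalently $-\Delta u_i=\rho_i$ with $u_i\in\mathbb{H}_0^1$.

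For the density functionals I would proceed as follows. For $\underline H_\theta$ on $\Gamma_{M_1}\times\Gamma_{M_2}$, using self-adjointness of $\Delta^{-1}$ the Euler--Lagrange equations are $\ln\rho_1+\alpha\Delta^{-1}\rho_1-\beta\Delta^{-1}\rho_2=\mathrm{const}$ and $\theta\ln\rho_2-\theta\gamma\Delta^{-1}\rho_2-\beta\Delta^{-1}\rho_1=\mathrm{const}$; substituting $u_i=-\Delta^{-1}\rho_i$, solving for $\rho_i$, and fixing the constants by $\int_\Omega\rho_i=M_i$ reproduces exactly (\ref{ls0}), which together with $-\Delta u_i=\rho_i$ is (\ref{ls}). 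For the hybrid $F_\theta^{M_2}$ on $\Gamma_{M_1}\times\mathbb{H}_0^1$, set $u_1:=-\Delta^{-1}\rho$ and $\rho_2:=M_2\,e^{-\gamma w+\theta\beta\Delta^{-1}\rho}/\!\int_\Omega e^{-\gamma w+\theta\beta\Delta^{-1}\rho}$; then $\delta F_\theta^{M_2}/\delta w=0$ reads $\Delta w+\rho_2=0$, which is the second equation of (\ref{ls}) with $u_2=w$, and since this gives $\Delta^{-1}\rho_2=-w$, the equation $\delta F_\theta^{M_2}/\delta\rho=\lambda$ becomes $\ln\rho-\alpha u_1+\beta w=\mathrm{const}$, i.e.\ $\rho=M_1\,e^{\alpha u_1-\beta u_2}/\!\int_\Omega e^{\alpha u_1-\beta u_2}$, and with $\rho=-\Delta u_1$ this is the first equation of (\ref{ls}). (When $\gamma=0$, $F_\theta^{M_2}$ is independent of $w$; one then just reads the associated potential as $w:=-\Delta^{-1}\rho_2$ and the rest goes through unchanged.)

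For $\overline H^{M_1,M_2}_\theta$ on $(\mathbb{H}_0^1)^2$ I would differentiate (\ref{Hdef}) directly in $u_1,u_2$: writing $E_i$ for the left-hand side of the $i$-th equation of (\ref{ls}), a short computation gives $\delta\overline H_\theta/\delta u_1=-\alpha E_1+\beta E_2$ and $\delta\overline H_\theta/\delta u_2=\beta E_1+\theta\gamma E_2$, and the $2\times2$ coefficient matrix has determinant $-(\beta^2+\theta\alpha\gamma)$, which is nonzero under the nondegeneracy condition $\beta^2+\theta\alpha\gamma\neq0$ that also underlies formulas (\ref{qrs})--(\ref{qrs1}); hence both variations vanish iff $E_1=E_2=0$, i.e.\ iff (\ref{ls}) holds. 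For the converse direction it remains to check that the objects built from a solution of (\ref{ls}) lie in the required spaces: if $(u_1,u_2)\in(\mathbb{H}_0^1)^2$ solves (\ref{ls}), the Moser--Trudinger exponential-integrability bound gives $e^{\alpha u_1-\beta u_2},\,e^{-\gamma u_2-\theta\beta u_1}\in L^p(\Omega)$ for every $p<\infty$, hence $u_i\in W^{2,p}(\Omega)\hookrightarrow C^{1,\sigma}(\overline\Omega)$ and $\rho_i:=-\Delta u_i$ is bounded, strictly positive, has $\int_\Omega\rho_i=M_i$ (integrate the $i$-th equation) and $\int_\Omega\rho_i\ln\rho_i<\infty$; thus $(\rho_1,\rho_2)\in\Gamma_{M_1}\times\Gamma_{M_2}$ and $(\rho_1,u_2)\in\Gamma_{M_1}\times\mathbb{H}_0^1$, all three functionals are finite and Gateaux-differentiable at these points, and running the computations above backwards yields the claimed critical points. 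I expect the only genuinely non-routine steps to be this regularity/space-membership verification and keeping the determinant $\beta^2+\theta\alpha\gamma$ away from zero; everything else is direct calculus of variations.
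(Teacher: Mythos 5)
Your proposal is correct and follows essentially the route the paper intends: the paper does not write out a proof of Proposition \ref{propcrit}, but its preceding first-variation computations of $H_\theta$ (yielding (\ref{ls0}) and (\ref{us})) together with the definitions of $\overline{H}_\theta$, $\underline{H}_\theta$ and $F_\theta^{M_2}$ by partial substitution are exactly the calculations you carry out directly on the reduced functionals. Your explicit flagging of the nondegeneracy condition $\beta^2+\theta\alpha\gamma\neq 0$ (and of the degenerate $\gamma=0$ case for $F_\theta^{M_2}$) is a point the paper leaves implicit in (\ref{qrs}), and is a welcome addition rather than a deviation.
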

\section{The Case of Conflict}\label{seccoc}
From now on we assume the case of conflict $\theta=-1$.
 The Liouville system (\ref{ls}) takes the form
 \be\label{lstheta-1}\Delta u_1+M_1\frac{e^{\alpha u_1-\beta u_2}}{\int_\Omega e^{\alpha u_1-\beta u_2}}=0 \ \ \ ; \ \ \ \Delta u_2+M_2\frac{e^{-\gamma u_2+\beta u_1}}{\int_\Omega e^{-\gamma u_2+\beta u_1}} =0 \  , \  \ \ u_1=u_2=0 \ \ \text{on} \ \partial\Omega \ .  \ee

 Here and thereafter we omit the index $\theta$ from $\overline{H}_\theta$, $\underline{H}_\theta$ and $F^M_\theta$. In particular
   \be\label{F^M}F^{M}(\rho,w):= \int\rho\ln\rho +\frac{\alpha}{2}\int \rho \Delta^{-1}(\rho) +\left[ \frac{\gamma}{2}\int|\nabla w|^2 + M\ln\left(\int e^{-\gamma w-\beta \Delta^{-1}(\rho)}\right) \right]\ee

 \begin{lemma}
 $$\sup_{\rho_2\in\Gamma_{M_2}}\underbar{H}(\rho_1,\rho_2)= \inf_{w\in \mathbb{H}_0^1} F^{M_2}(\rho,w)- M_2\ln M_2 \ . $$
 \end{lemma}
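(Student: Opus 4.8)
The plan is to realize both sides as the two marginal extrema of a single concave--convex functional of the auxiliary pair $(\rho_2,w)$, with $\rho_1$ playing no active role, and then invoke minimax duality. Throughout I use that for $\rho_1\in\Gamma_{M_1}$ one has $\phi:=-\beta\Delta^{-1}(\rho_1)\in\mathbb{H}_0^1$ (since $\Gamma_{M_1}$-densities lie in $L\ln L$ and $\Delta^{-1}$ maps $L\ln L$ into $\mathbb{H}_0^1$ in dimension two), so all the integrals below are finite.

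First I would strip off the common part. In the conflict case $\theta=-1$ the terms $\int\rho_1\ln\rho_1+\tfrac{\alpha}{2}\int\rho_1\Delta^{-1}(\rho_1)$ occur identically in $\underline{H}(\rho_1,\rho_2)$ and in $F^{M_2}(\rho,w)$ (recall $\rho=\rho_1$), and $-\beta\int\rho_2\Delta^{-1}(\rho_1)=\int\rho_2\phi$ while $e^{-\gamma w-\beta\Delta^{-1}(\rho)}=e^{-\gamma w+\phi}$. So the assertion is equivalent to
\[
\sup_{\rho_2\in\Gamma_{M_2}}\Phi(\rho_2)=\inf_{w\in\mathbb{H}_0^1}G(w)-M_2\ln M_2,\qquad
\Phi(\rho_2):=-\int\rho_2\ln\rho_2+\tfrac{\gamma}{2}\int\rho_2\Delta^{-1}(\rho_2)+\int\rho_2\phi,
\]
with $G(w):=\tfrac{\gamma}{2}\int|\nabla w|^2+M_2\ln\int e^{-\gamma w+\phi}$. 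Next I would introduce $L:\Gamma_{M_2}\times\mathbb{H}_0^1\to\R$, $L(\rho_2,w):=-\int\rho_2\ln\rho_2+\int\rho_2\phi+\tfrac{\gamma}{2}\int|\nabla w|^2-\gamma\int\rho_2 w$. Completing the square, $\tfrac{\gamma}{2}\|\nabla w\|_2^2-\gamma\int\rho_2 w=\tfrac{\gamma}{2}\|\nabla(w+\Delta^{-1}\rho_2)\|_2^2+\tfrac{\gamma}{2}\int\rho_2\Delta^{-1}(\rho_2)$, so $\inf_w L(\rho_2,w)=\Phi(\rho_2)$, attained at $w=-\Delta^{-1}\rho_2$; dually the Gibbs (relative-entropy) variational principle gives $\sup_{\rho_2\in\Gamma_{M_2}}\big[-\int\rho_2\ln\rho_2+\int\rho_2(\phi-\gamma w)\big]=M_2\ln\int e^{\phi-\gamma w}-M_2\ln M_2$, attained at $\rho_2=M_2e^{\phi-\gamma w}/\int e^{\phi-\gamma w}$, hence $\sup_{\rho_2}L(\rho_2,w)=G(w)-M_2\ln M_2$. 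So the claim is exactly the minimax equality $\sup_{\rho_2}\inf_w L=\inf_w\sup_{\rho_2}L$, and ``$\le$'' is automatic.

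For the reverse inequality, if $\gamma=0$ then $L$ is independent of $w$ and the identity follows from the Gibbs principle alone, so assume $\gamma>0$. Here $L(\cdot,w)$ is concave on the convex set $\Gamma_{M_2}$ and $L(\rho_2,\cdot)$ is convex on $\mathbb{H}_0^1$. Moreover $G$ is coercive: Jensen gives $M_2\ln\int e^{-\gamma w+\phi}\ge M_2\ln|\Omega|-\tfrac{\gamma M_2}{|\Omega|}\int_\Omega w+\tfrac{M_2}{|\Omega|}\int_\Omega\phi\ge-C(1+\|\nabla w\|_2)$ by Poincaré (and $\int_\Omega\phi$ is a finite constant), so $G(w)\ge\tfrac{\gamma}{2}\|\nabla w\|_2^2-C(1+\|\nabla w\|_2)$. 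A minimizing sequence $w_n$ then has $\|\nabla w_n\|_2$ bounded, so $w_n\rightharpoonup w^*$ in $\mathbb{H}_0^1$; the Dirichlet term is weakly lower semicontinuous, and the Moser--Trudinger inequality bounds $\{e^{-\gamma w_n}\}$ in every $L^q(\Omega)$, so by Vitali (with $w_n\to w^*$ a.e.\ and H\"older against $e^\phi\in L^{q'}$, which is again Moser--Trudinger for $q'\phi\in\mathbb{H}_0^1$) $\int e^{-\gamma w_n+\phi}\to\int e^{-\gamma w^*+\phi}$. Hence $G$ attains its minimum at some $w^*$, with Euler--Lagrange equation $\Delta w^*+M_2e^{-\gamma w^*+\phi}/\int e^{-\gamma w^*+\phi}=0$, i.e.\ $w^*=-\Delta^{-1}\rho_2^*$ with $\rho_2^*:=M_2e^{-\gamma w^*+\phi}/\int e^{-\gamma w^*+\phi}\in\Gamma_{M_2}$. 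By the two marginal computations, $w^*$ minimizes $L(\rho_2^*,\cdot)$ and $\rho_2^*$ maximizes $L(\cdot,w^*)$, so $(\rho_2^*,w^*)$ is a saddle point of $L$; therefore $\inf_w\sup_{\rho_2}L\le\sup_{\rho_2}L(\rho_2,w^*)=L(\rho_2^*,w^*)=\inf_w L(\rho_2^*,w)\le\sup_{\rho_2}\inf_w L$, closing the chain. Reading the common value $L(\rho_2^*,w^*)$ as $G(w^*)-M_2\ln M_2=\inf_w G-M_2\ln M_2$ on one side and as $\Phi(\rho_2^*)=\sup_{\rho_2}\Phi$ on the other yields the Lemma (and shows the left supremum is attained). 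Alternatively one may skip the construction of $w^*$ and apply Sion's minimax theorem directly to $L$, which is concave and upper semicontinuous in $\rho_2$ and convex, lower semicontinuous and coercive in $w$.

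The substantive step is the conceptual one --- recognizing $\underline{H}(\rho_1,\cdot)$ and $F^{M_2}(\rho,\cdot)$ as the two partial extrema of the single concave--convex functional $L$; after that everything is a routine Legendre/minimax computation. The only genuinely technical point is the existence of the minimizer $w^*$ (equivalently, justifying the minimax exchange): coercivity is cheap, coming just from Jensen, while the lower semicontinuity of the exponential term needs the Moser--Trudinger inequality. The integrability bookkeeping ($\int\rho_2 w$, $\int\rho_2\phi$ finite, $e^\phi\in L^{q'}$ for all $q'$) is \emph{not} an obstacle, since $\phi\in\mathbb{H}_0^1$ makes all of it automatic.
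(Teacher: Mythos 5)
Your proof is correct and follows essentially the same route as the paper: both introduce the intermediate functional in $(\rho_2,w)$ (your $L$ is exactly the paper's $\underline{H}(\rho_1,\rho_2,w)$ stripped of the $\rho_1$-only terms), identify the two sides of the identity as its partial extrema via the quadratic completion and the Gibbs variational principle, and then exchange $\inf_w$ with $\sup_{\rho_2}$. The only difference is that the paper simply asserts the minimax exchange, whereas you justify it by constructing the saddle point $(\rho_2^*,w^*)$ (or by Sion's theorem), which is a welcome strengthening of the same argument rather than a different approach.
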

 \begin{proof}
 First note that
 $$ \frac{1}{2}\int \rho \Delta^{-1}(\rho) = \inf_{w\in \mathbb{H}_0^1}\frac{1}{2}\int|\nabla w|^2 + \int \rho w \ . $$
 $$ \underline{H}(\rho_1, \rho_2):= \int\rho_1\ln\rho_1 - \int\rho_2\ln\rho_2 + \frac{\alpha}{2}\int\rho_1 \Delta^{-1}(\rho_1) + \frac{\gamma}{2}\int\rho_2 \Delta^{-1}(\rho_2)-\beta\int\rho_2\Delta^{-1}(\rho_1) \ . $$
 $$ \leq  \int\rho_1\ln\rho_1 - \int\rho_2\ln\rho_2 + \frac{\alpha}{2}\int\rho_1 \Delta^{-1}(\rho_1) + \gamma\left( \frac{1}{2}\int|\nabla w|^2 - \int \rho_2 w\right)-\beta\int\rho_2\Delta^{-1}(\rho_1):= \underline{H}(\rho_1,\rho_2,w) \ , $$
 and $\inf_{w\in \mathbb{H}_0^1}\underline{H}(\rho_1,\rho_2, w)=\underline{H}(\rho_1,\rho_2)$.
 A direct calculation shows that
 $$  \sup_{\rho_2\in\Gamma_{M_2}}\left\{ - \int\rho_2\ln\rho_2  - \gamma \int \rho_2 w-\beta\int\rho_2\Delta^{-1}(\rho_1)\right\}
 = M_2\ln\left(\int e^{-\gamma w-\beta \Delta^{-1}(\rho_1)}\right)-M_2\ln M_2 \ . $$
 In particular,
 $$ \sup_{\rho_2\in\Gamma_{M_2}}\underline{H}(\rho,\rho_2,w)=F^{M_2}(\rho,w)-M_2\ln M_2 \ . $$
 Hence $\inf_{w\in \mathbb{H}_0^1}F^{M_2}(\rho,w)-M_2\ln M_2=$
 $$ \inf_{w\in \mathbb{H}_0^1}\sup_{\rho_2\in\Gamma_{M_2}}\underline{H}(\rho,\rho_2,w)=
 \sup_{\rho_2\in\Gamma_{M_2}}\inf_{w\in \mathbb{H}_0^1}\underline{H}(\rho,\rho_2,w)
 =\sup_{\rho_2\in\Gamma_{M_2}}\underline{H}(\rho,\rho_2)$$
 \end{proof}
 \begin{lemma}\label{lemma1.2}
 For any $(u_1,u_2)\in (\mathbb{H}_0^1)^2$
  \begin{multline}\label{oH=H}\overline{H}^{M_1,M_2}(u_1,u_2)+M_1\ln M_1-M_2\ln M_2 =\\ \inf_{\rho_1\in\Gamma_{M_1}}\sup_{\rho_2\in\Gamma_{M_2}} H(\rho_1,\rho_2, u_1, u_2) \equiv \sup_{\rho_2\in\Gamma_{M_2}}\inf_{\rho_1\in\Gamma_{M_1}} H(\rho_1,\rho_2, u_1, u_2) \ . \end{multline}
 If, in addition,  $\alpha\gamma\geq \beta^2$  then for any $(\rho_1, \rho_2)\in\Gamma_{M_1, M_2}$
 \be\label{uH=H} \underline{H}(\rho_1,\rho_2)= \inf_{u_1,u_2\in \mathbb{H}_0^1} H(\rho_1, \rho_2, u_1, u_2) \ . \ee
 \end{lemma}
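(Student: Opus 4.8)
The plan is to prove the two identities by convex duality, the key structural fact being that, with $\theta=-1$, the functional $H$ contains no term coupling $\rho_1$ with $\rho_2$. Grouping terms, $H(\rho_1,\rho_2,u_1,u_2)=D(u_1,u_2)+\bigl(\int\rho_1\ln\rho_1+\int\rho_1(\beta u_2-\alpha u_1)\bigr)+\bigl(-\int\rho_2\ln\rho_2-\int\rho_2(\gamma u_2-\beta u_1)\bigr)$, where $D(u_1,u_2):=\tfrac\alpha2\int|\nabla u_1|^2+\tfrac\gamma2\int|\nabla u_2|^2-\beta\int\nabla u_1\cdot\nabla u_2$; the first parenthesis involves only $\rho_1$ and the second only $\rho_2$ (for fixed $u_1,u_2$). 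All integrals occurring are finite on $\Gamma_{M_1}\times\Gamma_{M_2}\times(\mathbb{H}_0^1)^2$: $\int e^{\lambda u}<\infty$ for every $u\in\mathbb{H}_0^1$ and $\lambda\in\R$ by the Moser--Trudinger inequality \eqref{trudin}; $\int\rho u$ is finite for $\rho\in\Gamma_M$, $u\in\mathbb{H}_0^1$ by Young's inequality $\rho u\le\rho\ln\rho-\rho+e^{u}$ (and the same with $-u$); and in two dimensions $\mathbb{L}\ln\mathbb{L}\hookrightarrow\mathbb{H}^{-1}$, so $\Delta^{-1}\rho\in\mathbb{H}_0^1$ whenever $\rho\in\mathbb{L}\ln\mathbb{L}$.

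For \eqref{oH=H} fix $(u_1,u_2)$. Because the $\rho_1$- and $\rho_2$-parentheses decouple, both orders of optimization give $D(u_1,u_2)+\inf_{\rho_1\in\Gamma_{M_1}}(\cdot)+\sup_{\rho_2\in\Gamma_{M_2}}(\cdot)$, with no minimax theorem required. Each remaining problem is a Gibbs variational problem: for $g\in\mathbb{H}_0^1$, nonnegativity of the relative entropy of $\rho/M$ against $e^{-g}/\!\int e^{-g}$ yields $\inf\{\int\rho\ln\rho+\int\rho g:\rho\in\Gamma_M\}=M\ln M-M\ln\int e^{-g}$, attained at the Gibbs density $\rho=Me^{-g}/\!\int e^{-g}\in\Gamma_M$. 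Applying this with $g=\beta u_2-\alpha u_1$ to the first parenthesis and with $g=\gamma u_2-\beta u_1$ to the negative of the second gives $\inf_{\rho_1}(\cdot)=M_1\ln M_1-M_1\ln\int e^{\alpha u_1-\beta u_2}$ and $\sup_{\rho_2}(\cdot)=M_2\ln\int e^{\beta u_1-\gamma u_2}-M_2\ln M_2$; adding $D(u_1,u_2)$ and comparing with the definition \eqref{Hdef} of $\overline{H}^{M_1,M_2}$ at $\theta=-1$ reproduces exactly $\overline{H}^{M_1,M_2}(u_1,u_2)+M_1\ln M_1-M_2\ln M_2$. Note this half requires no sign condition on $\alpha\gamma-\beta^2$.

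For \eqref{uH=H} fix $(\rho_1,\rho_2)\in\Gamma_{M_1}\times\Gamma_{M_2}$; the entropy terms are then constant, so one must minimize over $(u_1,u_2)$ the affine-quadratic functional $\mathcal{Q}(u_1,u_2):=D(u_1,u_2)-\alpha\int u_1\rho_1-\gamma\int u_2\rho_2+\beta\int\rho_1u_2+\beta\int\rho_2u_1$. Write $D(u_1,u_2)=\tfrac12B\bigl((u_1,u_2),(u_1,u_2)\bigr)$ with $B$ the associated symmetric bilinear form; putting $\phi_i:=-\Delta^{-1}\rho_i\in\mathbb{H}_0^1$, a direct integration by parts shows the linear part of $\mathcal{Q}$ equals $-B\bigl((u_1,u_2),(\phi_1,\phi_2)\bigr)$ (equivalently, $(\phi_1,\phi_2)$ solves the Euler--Lagrange system, which decouples to $\Delta u_i=-\rho_i$). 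Completing the square, $\mathcal{Q}(u)=\tfrac12B(u-\phi,u-\phi)-\tfrac12B(\phi,\phi)$. The form $B$ is governed by the symmetric matrix with diagonal $(\alpha,\gamma)$ and off-diagonal entries $-\beta$, which, as $\alpha,\gamma\ge0$, is positive semidefinite precisely when $\alpha\gamma\ge\beta^2$; under that hypothesis $\inf_u\mathcal{Q}(u)=-\tfrac12B(\phi,\phi)=-D(\phi)$, attained at $u=\phi$, while if $\alpha\gamma<\beta^2$ the form is indefinite and the infimum is $-\infty$. Finally $-D(\phi)$ is computed by further integrations by parts and the symmetry $\int\rho_2\Delta^{-1}\rho_1=\int\rho_1\Delta^{-1}\rho_2$ to be $\tfrac\alpha2\int\rho_1\Delta^{-1}\rho_1+\tfrac\gamma2\int\rho_2\Delta^{-1}\rho_2-\beta\int\rho_2\Delta^{-1}\rho_1$, so restoring the entropy terms gives $\underline{H}(\rho_1,\rho_2)$ as in \eqref{uHdef}.

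I expect the only genuinely non-formal part to be the functional-analytic bookkeeping behind the above: the finiteness of the mixed integrals on the stated domain, and the two-dimensional embedding $\mathbb{L}\ln\mathbb{L}\hookrightarrow\mathbb{H}^{-1}$ guaranteeing $\Delta^{-1}\rho_i\in\mathbb{H}_0^1$ (so that the completion of the square is legitimate); both are standard consequences of Moser--Trudinger and Orlicz duality. The algebraic steps — the Gibbs formula, identifying $\overline{H}^{M_1,M_2}$ and $\underline{H}$, and completing the square — are routine, and in particular the hypothesis $\alpha\gamma\ge\beta^2$ enters only, and exactly, as the positivity of $B$ needed to bound $\mathcal{Q}$ from below.
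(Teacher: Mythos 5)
Your proof is correct and follows essentially the same route as the paper: the Gibbs densities (\ref{ls0}) optimize $H$ in $\rho_1$ and $\rho_2$ (the decoupling of the two making $\inf\sup=\sup\inf$ automatic), and joint convexity of the quadratic part in $(u_1,u_2)$ under $\alpha\gamma\ge\beta^2$ forces the minimizer $u_i=-\Delta^{-1}\rho_i$. Your explicit completion of the square even covers the borderline case $\alpha\gamma=\beta^2$ directly, where the paper instead appeals to a limit argument.
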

 \begin{proof}
 The  equalities  in (\ref{oH=H}) follow from the definition of $\overline{H}$  (\ref{Hdef}) which use (\ref{ls0}). Indeed, (\ref{ls0}) are the {\it unique} minimizer (maximizer) of $H$ as a function of $\rho_1$ ($\rho_2$) where $u_1, u_2$ are fixed, since $H$ is strictly convex in $\rho_1$ and strictly concave in $\rho_2$.
 \par
 To get (\ref{uH=H}) note that  $\alpha\gamma > \beta^2$ implies that $H$ is strictly convex, jointly  in $(u_1,u_2)$, and the only minimizer is $\Delta u_i+\rho_i=0$, namely $u_i=-\Delta^{-1}(\rho_i)$, $i=1,2$. Then (\ref{uH=H}) follows directly from definition (\ref{uHdef}).  The case of equality $\alpha\gamma= \beta^2$ follows from a simple limit argument.
 \end{proof}
 \begin{lemma}\label{lemmaoverH}
 If $\alpha\gamma\geq \beta^2$ then
 $$\inf_{u_1,u_2\in \mathbb{H}_0^1}\overline{H}^{M_1,M_2}(u_1,u_2)= \inf_{\rho\in\Gamma_{M_1}; w\in \mathbb{H}_0^1} F^{M_2}(\rho,w) - M_2\ln M_2 \ . $$
 \end{lemma}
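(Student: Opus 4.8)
The plan is to run the convex-duality argument that, in the one-population setting of Section~\ref{Rev}, derives the equivalence of the logarithmic HLS and Moser--Trudinger inequalities from the two saddle identities $F(\rho)=\inf_u H(\rho,u)$ and $\overline{H}_\alpha^M(u)=\inf_\rho H(\rho,u)$, now applied to the two-population functional $H$. Concretely, I would chain the three identities already available: the duality $\sup_{\rho_2\in\Gamma_{M_2}}\underline{H}(\rho_1,\rho_2)=\inf_{w\in\mathbb{H}_0^1}F^{M_2}(\rho_1,w)-M_2\ln M_2$ established just above Lemma~\ref{lemma1.2}; identity \eqref{oH=H}; and identity \eqref{uH=H}, $\underline{H}(\rho_1,\rho_2)=\inf_{u_1,u_2\in\mathbb{H}_0^1}H(\rho_1,\rho_2,u_1,u_2)$, which is precisely where the standing hypothesis $\alpha\gamma\ge\beta^2$ enters: it makes the quadratic form $\tfrac\alpha2\int_\Omega|\nabla u_1|^2+\tfrac\gamma2\int_\Omega|\nabla u_2|^2-\beta\int_\Omega\nabla u_1\cdot\nabla u_2$ positive semidefinite, hence $H$ jointly convex in $(u_1,u_2)$. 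Taking $\inf_{u_1,u_2}$ in \eqref{oH=H} and commuting iterated infima shows that the left side of the claim equals, up to the additive constants $M_i\ln M_i$, the quantity $\inf_{\rho_1}\big(\inf_{u_1,u_2}\sup_{\rho_2}H\big)$, while \eqref{uH=H} and the duality above turn its right side into $\inf_{\rho_1}\big(\sup_{\rho_2}\inf_{u_1,u_2}H\big)$. Thus the lemma reduces to the minimax interchange
$$\inf_{u_1,u_2\in\mathbb{H}_0^1}\ \sup_{\rho_2\in\Gamma_{M_2}}H(\rho_1,\rho_2,u_1,u_2)=\sup_{\rho_2\in\Gamma_{M_2}}\ \inf_{u_1,u_2\in\mathbb{H}_0^1}H(\rho_1,\rho_2,u_1,u_2)$$
for each fixed $\rho_1\in\Gamma_{M_1}$, followed by $\inf_{\rho_1}$ and a bookkeeping of the $M_i\ln M_i$ terms.

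In this minimax equality, $\ge$ is weak duality; for the reverse inequality I would produce a saddle point. With $\rho_1$ fixed, $H$ is concave in $\rho_2$ on the convex set $\Gamma_{M_2}$ and convex in $(u_1,u_2)$, and the inner supremum has the explicit Gibbs form (computed exactly as in the duality quoted above): $\sup_{\rho_2\in\Gamma_{M_2}}H(\rho_1,\rho_2,u_1,u_2)=:G_{\rho_1}(u_1,u_2)$, attained at $\rho_2^{*}=M_2 e^{-\gamma u_2+\beta u_1}/\int_\Omega e^{-\gamma u_2+\beta u_1}$. If $\alpha\gamma>\beta^2$, the convex functional $G_{\rho_1}$ is coercive on $(\mathbb{H}_0^1)^2$: its quadratic part dominates $\int_\Omega|\nabla u_1|^2+\int_\Omega|\nabla u_2|^2$, the term $M_2\ln\int_\Omega e^{-\gamma u_2+\beta u_1}$ is bounded below by a constant multiple of $-1-\big(\int_\Omega|\nabla u_1|^2+\int_\Omega|\nabla u_2|^2\big)^{1/2}$ (bound the median of $-\gamma u_2+\beta u_1$ by a multiple of its Dirichlet norm via Chebyshev and Poincar\'e), and the remaining terms are linear in $(u_1,u_2)$ with $\rho_1$ fixed. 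Hence $G_{\rho_1}$ attains its infimum at some $(\bar u_1,\bar u_2)$; by Danskin's theorem the vanishing of the (sub)differential of $G_{\rho_1}$ at $(\bar u_1,\bar u_2)$ forces $(\bar u_1,\bar u_2)$ to minimize the convex map $H(\rho_1,\bar\rho_2,\cdot)$ as well, where $\bar\rho_2$ is the Gibbs density of $(\bar u_1,\bar u_2)$, i.e. $\bar u_i=-\Delta^{-1}\bar\rho_i$. Therefore
$$\inf_{u_1,u_2}\sup_{\rho_2}H=G_{\rho_1}(\bar u_1,\bar u_2)=H(\rho_1,\bar\rho_2,\bar u_1,\bar u_2)=\underline{H}(\rho_1,\bar\rho_2)\le\sup_{\rho_2}\inf_{u_1,u_2}H,$$
which closes the gap. (Alternatively one may invoke Sion's theorem after replacing $\Gamma_{M_2}$ by the weakly $L^1$-compact slices $\{N^{-1}\le\rho_2\le N,\ \int_\Omega\rho_2\ln\rho_2\le N\}\cap\Gamma_{M_2}$ and letting $N\to\infty$.)

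Finally, the degenerate case $\alpha\gamma=\beta^2$ — in particular $\gamma=0$, where $G_{\rho_1}$ is no longer coercive — I would reach by fixing $\alpha,\beta$, replacing $\gamma$ by $\gamma+\delta$ with $\alpha(\gamma+\delta)>\beta^2$, applying the equality just proved, and letting $\delta\to0^{+}$, using the monotone dependence and locally uniform bounds of $\overline{H}^{M_1,M_2}$, $\underline{H}$ and $F^{M_2}$ on the coefficient of the $w$-Dirichlet term so that both infima pass to the limit. Reassembling the chain of the first paragraph, constants included, then gives the assertion. The step I expect to be the real obstacle is the attainment of $\inf G_{\rho_1}$ (equivalently, the existence of the saddle point): neither $\Gamma_{M_2}$ nor $(\mathbb{H}_0^1)^2$ is compact, $G_{\rho_1}$ is bounded below only because the zero boundary datum on $w$ keeps $\int_\Omega e^{-\gamma u_2+\beta u_1}$ away from $0$, and it is exactly $\alpha\gamma\ge\beta^2$ that supplies the convexity — and, in the strict case, the coercivity — on which the whole interchange rests.
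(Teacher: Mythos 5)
Your proposal follows essentially the same route as the paper: it chains the duality identity $\sup_{\rho_2\in\Gamma_{M_2}}\underline{H}(\rho_1,\rho_2)=\inf_{w}F^{M_2}(\rho_1,w)-M_2\ln M_2$ with \eqref{oH=H} and \eqref{uH=H}, and reduces everything to the interchange $\inf_{u_1,u_2}\sup_{\rho_2}H=\sup_{\rho_2}\inf_{u_1,u_2}H$ for fixed $\rho_1$, which is precisely the paper's step \eqref{suprho2u1u2} resting on convexity in $(u_1,u_2)$ and concavity in $\rho_2$ under $\alpha\gamma\ge\beta^2$. The only difference is that the paper simply invokes ``the minmax Theorem'' there, while you supply the coercivity/saddle-point justification and the limiting argument for the degenerate case $\alpha\gamma=\beta^2$; this is a correct elaboration of the same argument rather than a different proof.
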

 \begin{proof}
Since $H$ is jointly convex in $(u_1, u_2)$ and concave in $\rho_2$ it follows, by the minmax Theorem,  that  \be\label{suprho2u1u2}\sup_{\rho_2\in\Gamma_{M_2}}\inf_{u_1,u_2\in \mathbb{H}_0^1} H(\rho_1, \rho_2, u_1, u_2)=\inf_{u_1,u_2\in \mathbb{H}_0^1}\sup_{\rho_2\in\Gamma_{M_2}}H(\rho_1, \rho_2, u_1, u_2)\ . \ee
 So
 \begin{multline}\label{longineq} \inf_{\rho_1\in\Gamma_{M_1}}\sup_{\rho_2\in\Gamma_{M_2}}\underline{H}(\rho_1,\rho_2)= \\ \inf_{\rho_1\in\Gamma_{M_1}}\sup_{\rho_2\in\Gamma_{M_2}}\inf_{u_1,u_2\in \mathbb{H}_0^1} H(\rho_1, \rho_2, u_1, u_2)=\inf_{\rho_1\in\Gamma_{M_1}}\inf_{u_1,u_2\in \mathbb{H}_0^1}\sup_{\rho_2\in\Gamma_{M_2}}H(\rho_1, \rho_2, u_1, u_2) \\
 =\inf_{u_1,u_2\in \mathbb{H}_0^1}\inf_{\rho_1\in\Gamma_{M_1}}\sup_{\rho_2\in\Gamma_{M_2}}H(\rho_1, \rho_2, u_1, u_2)= \inf_{u_1,u_2\in \mathbb{H}_0^1}\overline{H}^{M_1, M_2}(u_1, u_2) \ , \end{multline}
 where the first equality from (\ref{uH=H}), the second one from (\ref{suprho2u1u2}), the third one is trivial and the last one follows from
 (\ref{oH=H}).
 \end{proof}
  \begin{lemma}\label{lemmaFM}
  If  $\alpha\gamma>\beta^2$ then $\overline{H}^{M_1,M_2}$ is a Lyapunov functional for (\ref{qrs1}), that is
  $$ \frac{d}{dt}\overline{H}^{M_1,M_2}(u_1(\cdot, t), u_2(\cdot,t))\leq  0$$
  where $(u_1,u_2)$ is a solution of (\ref{qrs1}) in $C^1\left(\R^+; \left(\mathbb{H}_0^1(\Omega)\right)^2\right)$. The above equality is strict unless $(u_1,u_2)$ is a steady state of this system.
  \end{lemma}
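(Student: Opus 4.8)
The plan is the routine one for a gradient-type flow. We are in the conflict case $\theta=-1$, so the scalar $\beta^2+\alpha\gamma\theta$ appearing in (\ref{qrs1}) equals $\beta^2-\alpha\gamma$, which under the hypothesis $\alpha\gamma>\beta^2$ is \emph{negative}; this sign is precisely what will make the dissipation nonpositive. Concretely, I would differentiate $\overline H:=\overline H^{M_1,M_2}$ along a trajectory of (\ref{qrs1}) and exhibit $\tfrac{d}{dt}\overline H$ as minus a positive-definite quadratic form in the variational gradients of $\overline H$, which vanishes exactly at the equilibria.

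First I would record the first variations of $\overline H$. Writing, as in (\ref{ls0}), $\rho_1:=M_1 e^{\alpha u_1-\beta u_2}/\int_\Omega e^{\alpha u_1-\beta u_2}$ and $\rho_2:=M_2 e^{\beta u_1-\gamma u_2}/\int_\Omega e^{\beta u_1-\gamma u_2}$, and abbreviating $A:=\Delta u_1+\rho_1$, $B:=\Delta u_2+\rho_2$, a term-by-term differentiation of (\ref{Hdef}) with $\theta=-1$ gives
\[
\frac{\delta\overline H}{\delta u_1}=-\alpha A+\beta B\ ,\qquad
\frac{\delta\overline H}{\delta u_2}=\beta A-\gamma B\ .
\]
Here one uses that, since $\Omega\subset\R^2$, the Moser-Trudinger inequality (\ref{trudin}) makes $u\mapsto\ln\int_\Omega e^{pu}$ a $C^1$ functional on $\mathbb{H}_0^1$ for every $p$, so $\overline H\in C^1\big((\mathbb{H}_0^1)^2\big)$ and the derivatives above are well-defined. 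Since $\det\begin{pmatrix}-\alpha&\beta\\ \beta&-\gamma\end{pmatrix}=\alpha\gamma-\beta^2\neq 0$, one can solve for $(A,B)$ and verify that (\ref{qrs1}) with $\theta=-1$ is precisely $\eps\,\partial_t u_1=A$, $\eps\,\partial_t u_2=B$, i.e.\ the system (\ref{qrsexp}); in particular, along a solution in $C^1\big(\R^+;(\mathbb{H}_0^1)^2\big)$ one has $A,B=\eps\,\partial_t u_i\in\mathbb{H}_0^1\subset\mathbb{L}^2$.

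Then I would apply the chain rule to $t\mapsto\overline H\big(u_1(\cdot,t),u_2(\cdot,t)\big)$, legitimate because $\overline H$ is $C^1$ on $(\mathbb{H}_0^1)^2$ and the curve is $C^1$ into that space, and substitute the flow. Since all pairings involved are between elements of $\mathbb{L}^2$ (by the remark just made), they are ordinary $\mathbb{L}^2$ inner products and
\begin{align*}
\frac{d}{dt}\overline H&=\Big\langle\frac{\delta\overline H}{\delta u_1},\partial_t u_1\Big\rangle+\Big\langle\frac{\delta\overline H}{\delta u_2},\partial_t u_2\Big\rangle\\
&=\frac{1}{\eps}\big(-\alpha A+\beta B,\,A\big)_2+\frac{1}{\eps}\big(\beta A-\gamma B,\,B\big)_2=-\frac{1}{\eps}\Big(\alpha\|A\|_2^2-2\beta\,(A,B)_2+\gamma\|B\|_2^2\Big)\ .
\end{align*}
The symmetric matrix $\begin{pmatrix}\alpha&-\beta\\-\beta&\gamma\end{pmatrix}$ has determinant $\alpha\gamma-\beta^2>0$ and trace $\alpha+\gamma>0$ (note $\alpha\gamma>\beta^2$ forces $\gamma>0$), hence is positive definite, so the bracket is $\ge c\big(\|A\|_2^2+\|B\|_2^2\big)\ge 0$, and since $\eps>0$ we get $\tfrac{d}{dt}\overline H\le 0$. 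Equality holds iff $A=B=0$ a.e.\ in $\Omega$, i.e.\ iff $\Delta u_i+\rho_i=0$ — which is exactly the Liouville system (\ref{lstheta-1}) — equivalently, by Proposition \ref{propcrit}, iff $(u_1,u_2)$ is a critical point of $\overline H$, i.e.\ a steady state of (\ref{qrs1}). This would finish the proof.

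The algebraic core — positive-definiteness of $\begin{pmatrix}\alpha&-\beta\\-\beta&\gamma\end{pmatrix}$, i.e.\ the hypothesis $\alpha\gamma>\beta^2$ — is immediate, and the dissipation identity is a one-line computation once the first variations are in hand. So the only point needing real care is the functional-analytic bookkeeping in the prescribed solution class: that $\overline H$ is Fréchet-$C^1$ on $(\mathbb{H}_0^1)^2$ (via Moser-Trudinger), that (\ref{qrs1}) genuinely collapses to (\ref{qrsexp}), and that the duality pairings reduce to $\mathbb{L}^2$ inner products once one knows $A,B\in\mathbb{H}_0^1$. I expect that to be the main, if modest, obstacle.
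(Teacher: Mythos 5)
Your proof is correct and follows essentially the same route as the paper: differentiate $\overline{H}^{M_1,M_2}$ along the flow and recognize the result as minus a positive-definite quadratic form with determinant $\alpha\gamma-\beta^2>0$, vanishing exactly at solutions of the Liouville system. The only (cosmetic) difference is that you diagonalize in the variables $A=\Delta u_1+\rho_1$, $B=\Delta u_2+\rho_2$ rather than in the variational gradients $\delta_{u_i}\overline{H}$ as the paper does — the two quadratic forms are congruent under the invertible matrix relating them — and your remark that $A,B=\eps\,\partial_t u_i\in\mathbb{H}_0^1$ justifies the $\mathbb{L}^2$ pairings, a point the paper leaves implicit.
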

  \begin{proof} of Lemma\ref{lemmaFM} \\
From (\ref{qrs}) we get
 \begin{multline}\frac{d}{dt}\overline{H}^{M_1,M_2}=\delta_{u_1}\overline{H}^{M_1, M_2}\frac{\partial u_1}{\partial t}+ \delta_{u_2}\overline{H}^{M_1, M_2}
  \frac{\partial u_2}{\partial t}\\
 =  -\frac{\eps}{\alpha\gamma-\beta^2}\left[ \left(\beta\delta_{u_2}\overline{H}^{M_1,M_2} + \gamma\delta_{u_1}\overline{H}^{M_1,M_2}\right)\delta_{u_1}\overline{H}^{M_1, M_2}\right. \\ \left.
  + \left( \beta\delta_{u_1}\overline{H}^{M_1,M_2} + \alpha\delta_{u_2}\overline{H}^{M_1,M_2}\right)\delta_{u_2}\overline{H}^{M_1, M_2}\right]
  \\ = -\frac{\eps}{\alpha\gamma-\beta^2}\left[ \gamma \| \delta_{u_1} \overline{H}^{M_1, M_2}\|_2^2
 +\alpha \| \delta_{u_2} \overline{H}^{M_1, M_2}\|_2^2 +2\beta \left< \delta_{u_1}\overline{H}^{M_1,M_2}, \delta_{u_2}\overline{H}^{M_1,M_2}\right> \right]\\
 \leq -\frac{\eps}{\alpha\gamma-\beta^2}\left[ \gamma \| \delta_{u_1} \overline{H}^{M_1, M_2}\|_2^2
 +\alpha \| \delta_{u_2} \overline{H}^{M_1, M_2}\|_2^2 -2\beta  \|\delta_{u_1}\overline{H}^{M_1,M_2}\|_2 \|\delta_{u_2}\overline{H}^{M_1,M_2}\|_2 \right]
 \end{multline}
%
   where we used  Cauchy-Schwartz inequality. Since the quadratic form in $\delta_{u_i}\overline{H}^{M_1, M_2}$ is positive definite, the last term is non-positive and, in fact, negative unless  $\delta_{u_i}\overline{H}^{M_1, M_2}=0$ for \\ $i=1,2$.
\end{proof}
Let
\be\label{barF}\bar{F}^M(\rho):=\inf_{w\in \mathbb{H}^1_0}F^M(\rho,w) \  . \ee
\begin{definition}\label{defGamma}
Let $M_1>0, M_2\geq 0$. $(M_1,M_2)\in \underline{\Lambda}$ if and only if  $\bar{F}^{M_2}$ is unbounded from below on $\Gamma_{M_1}$.
The set where $\bar{F}^{M_2}$ is bounded from below on $\Gamma_{M_1}$ is $\overline{\Lambda}$ .
\par
In the case where $\Omega$ is a disc $D_R:=\{|x|\leq R\}$ we denote $\Gamma^R_{M}\subset \Gamma_{M}(D_R)$ the set of all radial functions in $\Gamma_{M}(D_R)$. Then $\underline{\Lambda}^R$ (resp. $\overline{\Lambda}^R$) is defined as above for $\bar{F}^{M_2}$ restricted to $\Gamma^R_{M_1}$.
\end{definition}
\begin{proposition}\label{pro1}
If $(M_1,M_2)$ is an interior point of $\overline{\Lambda}$  then there exists a minimizer of $F^{M_2}$ on $\Gamma_{M_1}\times \mathbb{H}_0^1$. If, moreover, $\alpha\gamma>\beta^2$ then this minimizer induces  a minimizer of $\overline{H}^{M_1,M_2}$
on $(\mathbb{H}_0^1)^2$ as well.
\end{proposition}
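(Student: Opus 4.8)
We argue by the direct method of the calculus of variations applied to $\bar{F}^{M_2}$ on $\Gamma_{M_1}$; the one genuinely delicate point is the compactness of minimizing sequences, and this is exactly where the hypothesis that $(M_1,M_2)$ be an \emph{interior} point of $\overline{\Lambda}$ enters. \emph{Step 1 (minimizing sequence; a priori bounds).} Since $(M_1,M_2)\in\overline{\Lambda}$, $I:=\inf_{\rho\in\Gamma_{M_1}}\bar{F}^{M_2}(\rho)>-\infty$; take $\rho_n\in\Gamma_{M_1}$ with $\bar{F}^{M_2}(\rho_n)\to I$. For each fixed $\rho_n$ the map $w\mapsto F^{M_2}(\rho_n,w)$ is strictly convex, weakly lower semicontinuous and, when $\gamma>0$, coercive on $\mathbb{H}_0^1$: Jensen's inequality gives $M_2\ln\int_\Omega e^{-\gamma w-\beta\Delta^{-1}\rho_n}\ge -C(\rho_n)-C\,\|\gr w\|_2$ (Poincar\'e controlling $\int_\Omega w$), so $\tfrac{\gamma}{2}\|\gr w\|_2^2$ dominates. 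Hence the infimum defining $\bar{F}^{M_2}(\rho_n)$ is attained at a unique $w_n\in\mathbb{H}_0^1$ solving $\Delta w_n+M_2\frac{e^{-\gamma w_n-\beta\Delta^{-1}\rho_n}}{\int e^{-\gamma w_n-\beta\Delta^{-1}\rho_n}}=0$ (if $\gamma=0$ the $w$–variable is absent and $w_n\equiv0$ serves), and $F^{M_2}(\rho_n,w_n)=\bar{F}^{M_2}(\rho_n)\to I$. The entropy $\int_\Omega\rho_n\ln\rho_n$ is bounded below ($t\ln t\ge-1/e$); once it is shown bounded above (Step 2), the same Jensen/Poincar\'e estimate inserted into $F^{M_2}(\rho_n,w_n)\to I$ forces $\|\gr w_n\|_2\le C$, while $\int_\Omega\rho_n\Delta^{-1}\rho_n$ stays bounded because $\Delta^{-1}(x,y)=(2\pi)^{-1}\ln|x-y|+h(x,y)$ with $h\in C(\overline{\Omega}\times\overline{\Omega})$, $\ln|x-y|$ is bounded above, and $\iint\ln|x-y|\,\rho_n(x)\rho_n(y)$ is bounded below by the logarithmic HLS inequality once the entropy is bounded.

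\emph{Step 2 (no concentration; passage to the limit).} The heart of the matter is equi–integrability of $\{\rho_n\}$. If it failed, along a subsequence $\rho_n\,dx\rightharpoonup\rho^*dx+\sum_k m_k\delta_{x_k}$ weakly-$*$ with $\rho^*\ge0$, $m_k>0$, $x_k\in\overline{\Omega}$, $\int_\Omega\rho^*+\sum_k m_k=M_1$, and at least one $m_k>0$. A ball decomposition of $\rho_n$ around the atoms, together with the logarithmic asymptotics of the Green function (halved at boundary points by reflection), shows that an atom of mass $m$ contributes to $\bar{F}^{M_2}(\rho_n)$, as its width $\e_n\downarrow0$, a divergent term $(2m-\tfrac{\alpha m^2}{4\pi})|\ln\e_n|$ at an interior point (resp. $(2m-\tfrac{\alpha m^2}{8\pi})|\ln\e_n|$ at a boundary point) plus a nonnegative contribution from $M_2\ln\int e^{-\beta\Delta^{-1}\rho_n}$; since $M_1<8\pi/\alpha$ (forced by $(M_1,M_2)\in\overline{\Lambda}$) and since, $(M_1,M_2)$ being \emph{interior}, every datum obtained by splitting off atoms still lies in $\overline{\Lambda}$ — so the complementary part of the energy stays bounded below — the whole expression would tend to $+\infty$, contradicting $\bar{F}^{M_2}(\rho_n)\to I$. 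Thus $\{\rho_n\}$ is equi–integrable; by Dunford--Pettis, $\rho_n\rightharpoonup\rho^*$ weakly in $\mathbb{L}^1(\Omega)$ along a subsequence, with $\rho^*\ge0$, no mass being lost on the bounded domain, so $\int_\Omega\rho^*=M_1$ and, by weak lower semicontinuity of $\rho\mapsto\int\rho\ln\rho$, $\rho^*\in\Gamma_{M_1}$. Moreover $w_n\rightharpoonup w^*$ in $\mathbb{H}_0^1$ (Step 1) and $\Delta^{-1}\rho_n\to\Delta^{-1}\rho^*$ strongly in every $\mathbb{L}^p(\Omega)$. Now $\rho\mapsto\int\rho\ln\rho$ is weakly l.s.c.; $\rho\mapsto\int\rho\Delta^{-1}\rho$ is continuous along equi–integrable weakly-$\mathbb{L}^1$ convergent sequences (the logarithmic kernel lying in every $\mathbb{L}^p(\Omega\times\Omega)$); $(\rho,w)\mapsto\ln\int e^{-\gamma w-\beta\Delta^{-1}\rho}$ is weakly l.s.c. by convexity, the Moser--Trudinger embedding for the $w$–dependence and the strong convergence of $\Delta^{-1}\rho_n$; and $\|\gr w^*\|_2^2\le\liminf_n\|\gr w_n\|_2^2$. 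Collecting these,
\[
F^{M_2}(\rho^*,w^*)\ \le\ \liminf_{n}F^{M_2}(\rho_n,w_n)\ =\ I\ \le\ \bar{F}^{M_2}(\rho^*)\ \le\ F^{M_2}(\rho^*,w^*),
\]
so $(\rho^*,w^*)$ minimizes $F^{M_2}$ on $\Gamma_{M_1}\times\mathbb{H}_0^1$; being a constrained critical point it also solves the Liouville system \eqref{lstheta-1} (Proposition~\ref{propcrit}).

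\emph{Step 3 (minimizer of $\overline{H}^{M_1,M_2}$).} Assume $\alpha\gamma>\beta^2$ and put $u_1^*:=-\Delta^{-1}\rho^*$, $u_2^*:=w^*$. The Euler--Lagrange equations of the minimizer give $\rho^*=M_1\frac{e^{\alpha u_1^*-\beta u_2^*}}{\int e^{\alpha u_1^*-\beta u_2^*}}$ and, for $w^*$, $\rho_2^*:=-\Delta w^*=M_2\frac{e^{-\gamma u_2^*+\beta u_1^*}}{\int e^{-\gamma u_2^*+\beta u_1^*}}$, i.e. $u_2^*=-\Delta^{-1}\rho_2^*$. Consequently $(\rho^*,\rho_2^*)$ is the (unique) $\inf_{\rho_1}\sup_{\rho_2}$–extremizer of $H(\cdot,\cdot,u_1^*,u_2^*)$, while $(u_1^*,u_2^*)$ is the $\inf_{(u_1,u_2)}$–extremizer of $H(\rho^*,\rho_2^*,\cdot,\cdot)$ (Lemma~\ref{lemma1.2}, using $\alpha\gamma\ge\beta^2$); so the chain of minimax identities in the proof of Lemma~\ref{lemmaoverH}, all of whose nested extrema are attained at the single configuration $(\rho^*,\rho_2^*,u_1^*,u_2^*)$, yields
\[
\overline{H}^{M_1,M_2}(u_1^*,u_2^*)=H(\rho^*,\rho_2^*,u_1^*,u_2^*)-(M_1\ln M_1-M_2\ln M_2)=\inf_{(\mathbb{H}_0^1)^2}\overline{H}^{M_1,M_2},
\]
so $(u_1^*,u_2^*)$ is a minimizer of $\overline{H}^{M_1,M_2}$ on $(\mathbb{H}_0^1)^2$ (its value being, by Lemma~\ref{lemmaoverH}, the minimum of $F^{M_2}$ on $\Gamma_{M_1}\times\mathbb{H}_0^1$ up to an additive constant).

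\emph{The main obstacle} is Step 2, excluding concentration of the minimizing sequence. It is precisely here that the hypothesis that $(M_1,M_2)$ lie in the \emph{interior} of $\overline{\Lambda}$ — not merely on it, and not merely that $M_1<8\pi/\alpha$ — is indispensable: it furnishes the strict inequality guaranteeing that after any loss of mass to an atom the reduced variational problem is still subcritical, hence bounded below, so that the $+\infty$ cost of the atom cannot be cancelled. Once compactness is secured, the lower–semicontinuity bookkeeping of Step 2 and the identification of Step 3 are routine.
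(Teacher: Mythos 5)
Your overall architecture (direct method, lower semicontinuity, then the minimax identification for $\overline{H}^{M_1,M_2}$) is sound, and Steps~1 and~3 are essentially what the paper does. But your Step~2 replaces the paper's actual mechanism with a concentration--compactness argument, and that argument has genuine gaps. The paper never analyzes atoms at all: it exploits interiority through a parameter scaling $q\in(0,1)$, $(\tilde M_1,\tilde M_2)=(qM_1,q^{-1}M_2)$, $\tilde\beta=\beta/q$, $\tilde\gamma=\gamma q$, for which the scaled functional is still bounded below; undoing the scaling yields the coercivity estimate $F^{M_2}(\rho,w)\geq C_q+\frac{\alpha}{2}(1-q^{-1})\int\rho\Delta^{-1}\rho$, whence $\int\rho^n\Delta^{-1}\rho^n$ is bounded along any minimizing sequence, the entropy is then bounded above, and compactness follows from $\Delta^{-1}:\mathbb{L}\ln\mathbb{L}\to\mathbb{L}_{\exp}$ being compact. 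This also exposes a circularity in your write-up: your Step~1 derives the bound on $\int\rho_n\Delta^{-1}\rho_n$ from the entropy bound via log-HLS, but the entropy bound is deferred to Step~2, which only yields equi-integrability, not a uniform bound on $\int\rho_n\ln\rho_n$.

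Two assertions inside your Step~2 are moreover false or unjustified. First, $(M_1,M_2)\in\overline{\Lambda}$ does \emph{not} force $M_1<8\pi/\alpha$: the entire point of Theorem~\ref{radbd}(a) is that (at least in the radial setting) boundedness from below persists for $M_1>8\pi/\alpha$ when $\beta>\alpha/2$, because the term $M_2\ln\int e^{-\gamma w-\beta\Delta^{-1}\rho}$ penalizes concentration. Consequently your divergent coefficient $2m-\frac{\alpha m^2}{4\pi}$ need not be positive, and you cannot discard that $M_2$-term as merely ``a nonnegative contribution'' --- it is quantitatively essential. Second, the claim that ``every datum obtained by splitting off atoms still lies in $\overline{\Lambda}$'' does not follow from interiority of the single point $(M_1,M_2)$, and is false in general: $\overline{\Lambda}$ is not downward-closed in $M_1$. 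For instance with $\gamma=0$ and $\beta>\alpha/2$, Theorems~\ref{blow} and~\ref{radbd} show the bounded region is $\{\Lambda>0\}\cup\{M_1\leq 8\pi/\alpha\}$, and decreasing $M_1$ from a point with $M_1>8\pi/\alpha$, $\Lambda>0$ can land in $\{\Lambda<0,\ \Lambda_2<0\}\subset\underline{\Lambda}$; so the ``complementary part of the energy'' can fail to be bounded below after mass is split off. To repair the step you would either need to carry out the atom analysis with the $H_\gamma^{M_2}$ term included quantitatively, or --- much more simply --- adopt the scaling/coercivity argument above.
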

\begin{proof}
Let $q\in (0,1)$, $\tilde{\gamma}:=\gamma q$, $\tilde{\beta}:=\beta/q$. Let  $\overline{\tilde{\Gamma}}$  defined according to Definition  \ref{defGamma}  with respect to $\tilde{F}$, where
$$\tilde{F}^M(\rho,w):= \int\rho\ln\rho +\frac{\alpha}{2}\int\rho \Delta^{-1}(\rho) + \frac{\tilde{\gamma}}{2}\int|\nabla w|^2 + M\ln\left(\int e^{-\tilde{\gamma} w-\tilde{\beta} \Delta^{-1}(\rho)}\right) \ . $$  We can find such $q$ for which  $(\tilde{M}_1,\tilde{M}_2):= (qM_1, q^{-1}M_2)\in \overline{\tilde{\Gamma}}$.  Set $\rho:=q\tilde{\rho}$, $w=q\tilde{w}$.  Then
$$ \tilde{F}^{\tilde{M}_2}(\tilde{\rho},\tilde{w}):= q^{-1}\int\rho\ln\rho+\frac{\alpha}{2q^2}\int \rho \Delta^{-1}(\rho) + \frac{\tilde{\gamma}}{2q^2}\int|\nabla w|^2 + \tilde{M}_2\ln\left(\int e^{-(\tilde{\gamma}/q) w-\tilde{\beta} q^{-1}\Delta^{-1}(\rho)}\right) $$
$$+q^{-1}M_1\ln q $$
$$ = q^{-1}\left[ \int\rho\ln\rho +\frac{\alpha}{2q}\int\rho \Delta^{-1}(\rho) + \frac{\gamma}{2}\int|\nabla w|^2 + M_2\ln\left(\int e^{-\gamma w-\beta \Delta^{-1}(\rho)}\right)\right]+q^{-1}M_1\ln q $$
$$= q^{-1}\left[ F^{M_2}(\rho,w) -\frac{\alpha}{2}(1-q^{-1}) \int \rho \Delta^{-1}(\rho)\right] +q^{-1}M_1\ln q \ . $$
Since $\tilde{F}^{\tilde{M}_2}(\tilde{\rho},\tilde{w})>C$ for some $C\in\R$ independent of $\tilde{\rho},\tilde{w}\in \Gamma_{\tilde{M}_1}\times \mathbb{H}_0^1$   by assumption, it follows
\be\label{FM2bound} F^{M_2}(\rho,w)\geq qC-M_1\ln q + \frac{\alpha}{2}(1-q^{-1}) \int\rho \Delta^{-1}(\rho) \ee
for any $(\rho,w)\in\Gamma_{M_1}\times \mathbb{H}_0^1$.
Let now $(\rho^n, w^n)$ be a minimizing sequence for $F^{M_2}$ in $\Gamma_{M_1}\times \mathbb{H}_0^1$. From (\ref{FM2bound}), and since $q\in(0,1)$ we conclude that $\int \rho^n \Delta^{-1}(\rho^n)$ is bounded uniformly from below. Since
$$ \frac{\gamma}{2}\int|\nabla w^n|^2 +M_2\ln\left(\int e^{-\gamma w^n-\beta \Delta^{-1}(\rho^n)}\right)\geq \frac{\gamma}{2}\int|\nabla w^n|^2 +M_2\ln\left(\int e^{-\gamma w^n}\right)$$
is bounded from below as well, we obtain that $\int \rho^n\ln\rho^n$ is  bounded from above. Let $\bar{\rho}$ be a weak limit of $\rho^n$ in the Zygmund space $\mathbb{L}\ln\mathbb{L}$. Then
$$ \int\bar{\rho}\ln\bar{\rho}\leq \liminf_{n\rightarrow\infty}\int\rho^n\ln\rho^n \ . $$
On the other hand, $\Delta^{-1}$ is a compact operator from  $\mathbb{L}\ln\mathbb{L}$ to its dual space $\mathbb{L}_{\exp}$, composed of all functions $w$ for which $e^{\lambda|w|}$ is integrable for some $\lambda(w)>0$. Hence $v^n:= -\Delta^{-1}(\rho^n)$ admits a strongly convergent subsequence in $\mathbb{L}_{\exp}$, whose limit is $\bar{v} =-\Delta^{-1}(\bar{\rho})$.
Hence
$$\lim_{n\rightarrow\infty}\int \rho^n \Delta^{-1}(\rho^n)= \int\bar{\rho} \Delta^{-1}(\bar{\rho})  \ . $$
Observe that $w^n$ are uniformly bounded in the $\mathbb{H}^1$ norm. Let $\bar{w}$ be its weak limit. By embedding of $\mathbb{H}^1_0$ in  $\mathbb{L}_{exp}$  we also obtain that   $e^{-\gamma w^n}$ is a strongly convergent sequence in $\mathbb{L}^p$ for any $p<\infty$, and its limit is
 $e^{-\gamma\bar{w}}$. This, and the strong convergence of $v^n$ in $\mathbb{L}_{exp}$ imply that
 $$\lim_{n\rightarrow\infty}\int e^{-\gamma w^n-\beta \Delta^{-1}(\rho^n)}= \int e^{-\gamma \bar{w}-\beta \Delta^{-1}(\bar{\rho})} \ . $$
as well as
$$ \liminf_{n\rightarrow\infty}\int|\nabla w^n|^2\geq \int|\nabla \bar{w}|^2 \ . $$

In particular it follows that
$$ \inf_{\rho\in\Gamma_{M_1}, u\in \mathbb{H}_0^1}  F^{M_2}(\rho,u)= F^{M_2}(\bar{\rho}, \bar{w}) \ . $$
\par
The proof for $\overline{H}^{M_1,M_2}$ in case $\alpha\gamma>\beta^2$ is easier, and is left to the reader.
\end{proof}
\subsection{Objectives}\label{secobj}
Our object is to characterize the sets $\overline{\Lambda}$ and $\underline{\Lambda}$.

Let
 \be\label{newH1}H^M_\gamma(\rho,w):=\frac{\gamma}{2}\int|\nabla w|^2 + M\ln\left(\int e^{-\gamma w-\beta \Delta^{-1}(\rho)}\right)  \ , \ \   \bar{H}^M_\gamma(\rho)=\inf_{w\in\mathbb{H}_0^1}H^M_\gamma(\rho,w) \  \ee
and recall $F$ given by  (\ref{IF}).   By (\ref{F^M}, \ref{newH1}) we get
$$ F^M(\rho,w)=F(\rho)+ H_\gamma(\rho,w)$$
 and by (\ref{barF}, \ref{newH1})
\be\label{newF}  \bar{F}^{M}(\rho) = F(\rho)+ \bar{H}^{M}_\gamma(\rho)\ee

 Note that $\bar{H}^{M}_\gamma$ is bounded from below uniformly in $\rho\in\Gamma_{M_1}$.   Indeed, since $\Delta^{-1}(\rho)\leq 0$ by the maximum principle, it follows that
  $$H_\gamma^M(\rho,w)\geq H_\gamma^M(0,w) = \frac{\gamma}{2}\int|\nabla w|^2 + M\ln\left(\int e^{-\gamma w}\right) \   $$
  for any $w\in \mathbb{H}_0^1$. The last expression is bounded from below on $\mathbb{H}_0^1$ for any $M>0$.

  Hence $\bar{F}^{M_2}(\rho)$ is bounded from below whenever $F$ is. The Free energy $F$  is bounded from below on $\Gamma_{M_1}$ for $M_1\leq 8\pi/\alpha$.  Hence, we expect that $\overline{\Lambda}$ contains  $M_1\leq 8\pi/\alpha$ for {\it any} $M_2\geq 0$ (recall Definition \ref{defGamma}).

To evaluate $\underline{\Lambda}$ we only have to indicate a sequence $\rho_j\in\Gamma_{M_1}$ for which $\bar{F}^{M_2}(\rho_j)\rightarrow-\infty$.  It is enough to establish such a sequence of radial functions in the disc, i.e in $\overline{\Lambda}^R$. The evaluation of $\overline{\Lambda}$ is more subtle. At this stage we can only investigate $\overline{\Lambda}^R$.

\subsection{Main results}\label{Mainresults}
Let
\be\label{gammadef}\Lambda(M_1, M_2):=  2(M_1-M_2)-\frac{\alpha  M_1^2}{4\pi}+\frac{\beta M_1M_2}{2\pi} -\gamma\frac{M_2^2}{4\pi} \ . \ee
 Note that $$\Lambda(M_1, M_2)=M_2 \left(-2+\frac{\beta M_1-\gamma M_2}{2\pi}\right) +  \left(2M_1-\frac{\alpha M_1^2}{4\pi}+\frac{\gamma M_2^2}{4\pi}\right):= \Lambda_1(M_1, M_2) + \Lambda_2(M_1, M_2) \ . $$

  \begin{theorem}\label{blow}
 If  both $\Lambda(M_1, M_2)<0$ and   $\Lambda_2(M_1, M_2)<0$ then $(M_1, M_2)\in\underline{\Lambda}$.
  \end{theorem}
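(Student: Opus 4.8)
The plan is to exhibit, under the hypotheses $\Lambda(M_1,M_2)<0$ and $\Lambda_2(M_1,M_2)<0$, an explicit family of radial densities $\rho_j\in\Gamma_{M_1}^R$ in a disc $D_R$ along which $\bar F^{M_2}(\rho_j)\to-\infty$; by Definition \ref{defGamma} and the reduction to the radial case noted in Section \ref{secobj}, this places $(M_1,M_2)$ in $\underline\Lambda$. Because $\bar F^{M_2}(\rho)=F(\rho)+\bar H^{M_2}_\gamma(\rho)$ by (\ref{newF}), I need two estimates: an upper bound on the Free energy $F(\rho_j)$ that goes to $-\infty$ like a concentration term, and an upper bound on $\bar H^{M_2}_\gamma(\rho_j)=\inf_w H^{M_2}_\gamma(\rho_j,w)$ obtained by inserting a well-chosen test function $w_j\in\mathbb H_0^1$. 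The arithmetic bookkeeping of the leading-order coefficients of these two contributions is exactly what is packaged in $\Lambda_2$ (the pure $\rho$-concentration balance: $2M_1-\alpha M_1^2/4\pi+\gamma M_2^2/4\pi$) and in $\Lambda_1$ (the cross term $M_2(-2+(\beta M_1-\gamma M_2)/2\pi)$), so that the sum blows up precisely when $\Lambda=\Lambda_1+\Lambda_2<0$.

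The concentrating family I would use is the standard one: take $\rho_j$ to be (a smoothed, mass-$M_1$-normalized version of) a bubble of scale $\eps_j\to 0$ centered at the origin, for instance $\rho_j(x)=M_1\,\eta_{\eps_j}(|x|)$ where $\eta_\eps$ mimics $\tfrac{1}{\pi}\eps^{-2}(1+|x|^2/\eps^2)^{-2}$ truncated to $D_R$, so that $\int\rho_j\ln\rho_j = -2M_1\ln\eps_j + O(1)$ and, using $\Delta^{-1}(x,y)\approx(2\pi)^{-1}\ln|x-y|$, $\tfrac{\alpha}{2}\int\rho_j\Delta^{-1}(\rho_j) = \tfrac{\alpha M_1^2}{4\pi}\ln\eps_j + O(1)$ (these are the computations behind the $8\pi/\alpha$ threshold for $F$ alone). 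Hence $F(\rho_j) = -\big(2M_1 - \tfrac{\alpha M_1^2}{4\pi}\big)|\ln\eps_j| + O(1)$. For the second piece, note $u_j:=-\Delta^{-1}(\rho_j)$ is itself a radial near-logarithmic spike of height $\sim\tfrac{M_1}{2\pi}|\ln\eps_j|$ near $0$; I would feed into $H^{M_2}_\gamma$ the test function $w_j:=-c\,u_j$ for a constant $c$ to be optimized, giving
\[
\bar H^{M_2}_\gamma(\rho_j)\le \frac{\gamma c^2}{2}\int|\nabla u_j|^2 + M_2\ln\!\Big(\int e^{(\gamma c-\beta)u_j}\Big).
\]
Here $\tfrac{\gamma}{2}\int|\nabla u_j|^2=\tfrac{\gamma}{2}\int\rho_j u_j = \tfrac{\gamma M_1^2}{4\pi}|\ln\eps_j| + O(1)$, while $\int e^{(\gamma c-\beta)u_j}$ behaves, by a Liouville-type bubble integral, like $\eps_j^{\,2 - (\beta-\gamma c)M_1/2\pi}$ up to bounded factors when the exponent is positive — contributing $M_2\big(2 - \tfrac{(\beta-\gamma c)M_1}{2\pi}\big)\ln\eps_j$. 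Collecting powers of $|\ln\eps_j|$,
\[
\bar F^{M_2}(\rho_j)\le -\Big[\,2M_1-\frac{\alpha M_1^2}{4\pi} - \frac{\gamma c^2 M_1^2}{4\pi} + M_2\Big(2-\frac{(\beta-\gamma c)M_1}{2\pi}\Big)\Big]|\ln\eps_j| + O(1).
\]
Choosing $c$ to maximize the bracket (a one-variable concave quadratic in $c$) — I expect the optimizer to make the two $\gamma$-dependent terms recombine into the $-\gamma M_2^2/4\pi$ and $+\beta M_1M_2/2\pi$ of $\Lambda$, likely $c=M_2/M_1$ modulo constants — turns the bracket into $\Lambda(M_1,M_2)$ (possibly after also scaling the spike height). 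Since $\Lambda(M_1,M_2)<0$ by hypothesis, the bracket is negative, so $\bar F^{M_2}(\rho_j)\to -\infty$.

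The role of the hypothesis $\Lambda_2(M_1,M_2)<0$ is the subtlety I expect to be the main obstacle, and it must be handled honestly: the bubble integral estimate $\int e^{(\beta-\gamma c)u_j}\sim \eps_j^{2-(\beta-\gamma c)M_1/2\pi}$ is only valid while $2-(\beta-\gamma c)M_1/2\pi>0$; if the optimal $c$ pushes that exponent non-positive, the integral is bounded (or the spike is integrated against a too-strong weight) and the naive computation breaks. In that regime one instead optimizes $c$ subject to the constraint that the exponent stay in the admissible range, and the binding constraint forces the contribution of $\bar H^{M_2}_\gamma$ down to its boundary value; $\Lambda_2<0$ is precisely the condition guaranteeing that even then the surviving pure-$\rho$ part $2M_1-\alpha M_1^2/4\pi+\gamma M_2^2/4\pi$ (note $\Lambda_2$ is this with a $+\gamma M_2^2/4\pi$, reflecting the $\gamma c^2$ term at the constrained optimum) is already negative, so $F(\rho_j)$ alone drives the blow-up. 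Thus the two hypotheses cover the two regimes (interior optimum versus constrained optimum for $c$), and in each regime I produce an explicit concentrating sequence. I would also need to check the routine points: that the smoothed, normalized $\rho_j$ genuinely lie in $\Gamma_{M_1}^R(D_R)$, that all the $O(1)$ error terms are genuinely bounded uniformly in $j$ (boundary effects from truncating to $D_R$ are controlled because $u_j\to 0$ on $\partial D_R$ and the tails of the bubble carry $O(\eps_j^2)$ mass), and that $w_j=-cu_j\in\mathbb H_0^1$.
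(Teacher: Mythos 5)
Your strategy is essentially the paper's: concentrate $\rho$ at a point at scale $\eps_j$, pair it with a logarithmic test function for $w$ whose spike carries total ``mass'' $M_2$ (your $c=M_2/M_1$), and read off the coefficient of $|\ln\eps_j|$, which is $\Lambda$ in the regime where the exponential integral diverges and $\Lambda_2$ in the regime where it stays $O(1)$. The paper implements exactly this with the rescaling $\rho^\psi(r)=\psi^2\rho(\psi r)$ and the test function $w^\psi_{M_2}=-(2\pi)^{-1}M_2\ln r$ on the annulus, which coincides with your $w_j=\tfrac{M_2}{M_1}u_j$ up to $O(1)$; and the two hypotheses $\Lambda<0$, $\Lambda_2<0$ do cover the two regimes exactly as you say.

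As written, however, your bookkeeping does not close, because of sign errors that go beyond typography. First, your own asymptotics $\int\rho_j\ln\rho_j=-2M_1\ln\eps_j+O(1)$ and $\tfrac{\alpha}{2}\int\rho_j\Delta^{-1}(\rho_j)=\tfrac{\alpha M_1^2}{4\pi}\ln\eps_j+O(1)$ sum to $F(\rho_j)=+\bigl(2M_1-\tfrac{\alpha M_1^2}{4\pi}\bigr)|\ln\eps_j|+O(1)$, not the minus sign you wrote; your version would make $F$ unbounded below for every $M_1<8\pi/\alpha$, contradicting the logarithmic HLS inequality. Second, with $w_j=-cu_j$ the exponent is $(\gamma c+\beta)u_j$, not $(\gamma c-\beta)u_j$; you clearly intend $w_j=+cu_j$ and exponent $(\beta-\gamma c)u_j$. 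Third, the regime description is inverted: $\int e^{(\beta-\gamma c)u_j}\sim\eps_j^{\,2-(\beta-\gamma c)M_1/2\pi}$ dominates the $O(1)$ bulk precisely when that exponent is \emph{negative}, in which case the integral blows up and contributes the \emph{positive} term $M_2\bigl(\tfrac{(\beta-\gamma c)M_1}{2\pi}-2\bigr)|\ln\eps_j|$ (equal to $\Lambda_1|\ln\eps_j|$ at $c=M_2/M_1$); when the exponent is nonnegative the integral is $O(1)$ and bounded below by $\pi$ (since $u_j\geq 0$), so it is never a source of decay. Putting the signs right, the coefficient of $|\ln\eps_j|$ in your upper bound is $2M_1-\tfrac{\alpha M_1^2}{4\pi}+\tfrac{\gamma c^2M_1^2}{4\pi}+M_2\max\bigl(0,\tfrac{(\beta-\gamma c)M_1}{2\pi}-2\bigr)$, which at $c=M_2/M_1$ equals $\Lambda(M_1,M_2)$ or $\Lambda_2(M_1,M_2)$ according to the sign of $\Lambda_1$, and negativity of the relevant one gives the blow-down. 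Your displayed inequality $\bar F^{M_2}(\rho_j)\le-[\cdots]\,|\ln\eps_j|+O(1)$ with a negative bracket instead produces a right-hand side tending to $+\infty$ and proves nothing; once these signs are repaired the argument is the paper's.
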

\begin{theorem}\label{radbd}
If   $M_1\leq 8\pi /\alpha$ then $(M_1, M)\in\overline{\Lambda}$ for any $M_2>0$.
Let  the  disc $D_R:= \{|z|\leq R\}$ be  our domain, for some $R>0$, and $\overline{\Lambda}^R$ as  in Definition \ref{defGamma}.  
Assume
\begin{description}
 \item{a)}$\Lambda(M_1, M_2)>0$ and  $2\beta/\alpha>\gamma M_2/4\pi+1$.
 \item{b)}  $(M_1,M_2)$ satisfies (a) and $M\geq M_2$
 \end{description}
 then $(M_1, M)\in\overline{\Lambda}^R$. 
\end{theorem}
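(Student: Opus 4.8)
The first assertion — that $M_1 \le 8\pi/\alpha$ forces $(M_1,M)\in\overline{\Lambda}$ for all $M_2 > 0$ — has essentially been established in the discussion of Section \ref{secobj}: by \eqref{newF}, $\bar F^{M}(\rho) = F(\rho) + \bar H^{M}_\gamma(\rho)$, the term $\bar H^M_\gamma$ is bounded below on $\Gamma_{M_1}$ uniformly in $\rho$ (since $\Delta^{-1}(\rho)\le 0$ reduces matters to the Moser–Trudinger bound for $\frac{\gamma}{2}\int|\nabla w|^2 + M\ln\int e^{-\gamma w}$), and $F$ is bounded below on $\Gamma_{M_1}$ for $M_1\le 8\pi/\alpha$ by the logarithmic HLS inequality. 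So one just needs to assemble these two facts; I would write this out in one short paragraph.

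The substance of the theorem is part (a): the radial lower bound on the disc $D_R$ when $\Lambda(M_1,M_2)>0$ and $2\beta/\alpha > \gamma M_2/(4\pi)+1$. The plan is to work with the equivalent quantity $\sup_{\rho_2\in\Gamma_{M_2}}\underline H(\rho_1,\rho_2)$, which by Lemma 4.1 equals $\bar F^{M_2}(\rho_1) - M_2\ln M_2$, so boundedness below of $\bar F^{M_2}$ on $\Gamma^R_{M_1}$ is the same as boundedness below of $\rho_1\mapsto\sup_{\rho_2}\underline H$. Fix a radial $\rho_1\in\Gamma^R_{M_1}$ and let $u_1 = -\Delta^{-1}(\rho_1)\ge 0$. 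For the inner supremum over $\rho_2$ we have the explicit formula already recorded in the proof of Lemma 4.1, so the problem reduces to bounding below
\[
\int\rho_1\ln\rho_1 + \frac{\alpha}{2}\int \rho_1 u_1 + \frac{\gamma}{2}\int|\nabla w|^2 + M_2\ln\!\Big(\int e^{-\gamma w + \beta u_1}\Big)
\]
after an optimal choice of $w\in\mathbb H^1_0$. In the radial setting the natural device is the change of variables $s = r^2$ (equivalently, passing to the ``mass variable'' $t\mapsto\int_{|x|\le t}\rho_1$), which turns the two-dimensional radial energy into a one-dimensional problem; this is the standard route by which $8\pi$ thresholds appear. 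One then combines: (i) the one-dimensional logarithmic-HLS / entropy bound controlling $\int\rho_1\ln\rho_1 + \frac{\alpha}{2}\int\rho_1 u_1$ from below in terms of $-\frac{\alpha M_1^2}{4\pi}$ and $2M_1$-type contributions; (ii) a one-dimensional Moser–Trudinger-type inequality applied to the term involving $w$, using the optimal $w$ determined by the elliptic equation $\gamma\Delta w = M_2 e^{-\gamma w+\beta u_1}/\!\int(\cdots)$ — here the cross term $\beta u_1$ is favorable in sign for the second population and must be traded against $\frac\alpha2\int\rho_1 u_1$; (iii) a Cauchy–Schwarz / completion-of-squares step on the quadratic form in $(M_1,M_2)$ that produces exactly $\Lambda(M_1,M_2)$. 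The hypothesis $2\beta/\alpha > \gamma M_2/(4\pi)+1$ should enter precisely as the condition guaranteeing that the best constant in the combined one-dimensional inequality is attained (non-degeneracy of the associated ODE boundary value problem), ensuring the infimum over $w$ does not itself run off to $-\infty$. I expect the main obstacle to be exactly this coupling: bounding the worst-case $\rho_1$ requires simultaneously choosing $w$ optimally, and one has to show the resulting two-parameter variational problem has its infimum controlled by $\Lambda > 0$ rather than escaping along a concentrating family; this is the technical heart and presumably where Theorem \ref{radbd}'s proof in Section \ref{secprof} does its real work.

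For part (b) — monotonicity in the second mass, i.e. if (a) holds for $(M_1,M_2)$ then $(M_1,M)\in\overline{\Lambda}^R$ for every $M\ge M_2$ — I would argue that $\bar H^{M}_\gamma(\rho)$ is monotone in $M$ in a controlled way, or more simply observe from the formula for the supremum over $\Gamma_{M_2}$ that increasing the mass of the second (self-repelling, mutually-repelled) population only adds a term that is bounded below uniformly. Concretely, $M\mapsto M\ln\big(\int e^{-\gamma w+\beta u_1}\big)$ together with the $\frac\gamma2\int|\nabla w|^2$ can only help once $M$ exceeds the critical $M_2$: write $\bar F^{M}(\rho) = \bar F^{M_2}(\rho) + [\bar H^M_\gamma(\rho) - \bar H^{M_2}_\gamma(\rho)]$ and bound the bracket below using $\Delta^{-1}(\rho)\le 0$ and the Moser–Trudinger inequality exactly as in the unconditional first assertion. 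This part should be short. The whole argument is then packaged via Lemma 4.1 to transfer the bound from $\sup_{\rho_2}\underline H$ back to $\bar F^{M}$, completing the membership in $\overline{\Lambda}^R$.
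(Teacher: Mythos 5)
Your treatment of the first assertion is correct and matches the paper: decompose $\bar F^{M}=F+\bar H^{M}_\gamma$ via \eqref{newF}, note $\bar H^{M}_\gamma$ is uniformly bounded below because $\Delta^{-1}(\rho)\le 0$, and invoke the logarithmic HLS bound for $F$ on $\Gamma_{8\pi/\alpha}$. But for part (a), which you yourself identify as the substance of the theorem, you have not given a proof. Your plan --- pass to $\sup_{\rho_2}\underline H$, change variables $s=r^2$, combine a one-dimensional log-HLS bound, a one-dimensional Moser--Trudinger bound for the optimal $w$, and a completion of squares producing $\Lambda(M_1,M_2)$ --- is a conjectural direct attack that you do not carry out, and it is not how the paper proceeds. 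The actual proof argues by contradiction with an iterative mass-reduction: assuming $\bar F^{M_2}$ is unbounded below on $\Gamma^R_{s}$ for some $s\in[8\pi/\alpha+\delta,M_1]$, one takes a blow-down sequence $\rho_j$, restricts each $\rho_j$ to an inner disc $D_{\psi_j}$ carrying mass $s-\delta$ (controlling the loss via the entropy bound $-t\ln t\le e^{-1}$, the sign of $\Delta^{-1}$, and an explicit estimate of the interaction energy in the annulus), then rescales by $\sqrt{\psi_j}$ and shows via the scaling identities \eqref{entropalpha}--\eqref{logalpha} and Lemma \ref{lemw} that the rescaled sequence still blows down in $\Gamma^R_{s-\delta}$ provided $\Lambda(s-\delta,M_2)>0$; iterating $n$ times contradicts the boundedness on $\Gamma^R_{8\pi/\alpha}$. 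None of this structure --- the contradiction, the iteration in mass, the restriction-plus-rescaling step, or the ODE analysis of the optimal $w$ on the annulus where $\underline u_j$ is exactly logarithmic --- appears in your proposal. In particular your guess about the role of the hypothesis $2\beta/\alpha>\gamma M_2/4\pi+1$ is off: it is not a non-degeneracy condition for a best constant, but rather the condition $\Lambda_1(8\pi/\alpha,M_2)>0$, which by linearity of $s\mapsto\Lambda_1(s,M_2)$ and concavity of $s\mapsto\Lambda(s,M_2)$ guarantees $\Lambda_1(s,M_2)>0$ on all of $[8\pi/\alpha,M_1]$, which is precisely the hypothesis $\beta M-\gamma M_2>4\pi$ under which Lemma \ref{lemw} pins down the Dirichlet energy $\frac\gamma2\int_{\sqrt\psi\le|x|\le1}|\nabla\underline w_j|^2$ as $\frac{\gamma M_2^2}{4\pi}\ln(1/\sqrt\psi)+o(|\ln\psi|)$.

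There is also a smaller gap in your part (b). You propose to bound $\bar H^{M}_\gamma(\rho)-\bar H^{M_2}_\gamma(\rho)$ below ``using Moser--Trudinger exactly as in the first assertion,'' but the difference of two infima cannot be estimated termwise, and the extra term $(M-M_2)\ln\left(\int e^{-\gamma w}\right)$ is \emph{not} bounded below uniformly in $w$ on its own --- the compensating gradient term is already committed to $\bar H^{M_2}_\gamma$. The paper's fix is to absorb it at the level of the un-minimized functional: $(M-M_2)\ln\left(\int e^{-\gamma w}\right)\ge -\eps\|\nabla w\|_2^2-C(\eps)$, which after rescaling yields $\bar H^{M}_\gamma(\rho)\ge\bar H^{M_2}_{\hat\gamma}(\rho)-C(\eps)$ with $\hat\gamma$ close to $\gamma$, and one then checks that the open conditions of (a) survive the perturbation $\gamma\mapsto\hat\gamma$. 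You would need to add this step for part (b) to close.
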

We now show that Theorem \ref{radbd} implies Theorem \ref{T1}. 

Note that $\Lambda_1(8\pi/\alpha,M_2)>0$ if and only if $2\beta/\alpha>\gamma M_2/4\pi+1$ and $M_2>0$. 
In that case,   $\Lambda_1(M_1,M_2)=0$ intersects $M_1=8\pi/\alpha$  at  $M_2:=\frac{4\pi}{\gamma} \left(\frac{2\beta}{\alpha}-1\right)>0$ (in particular, since  $2\beta/\alpha>1$). If $\gamma=0$ then there is no intersection. In any case, the domain  where  both $\Lambda(M_1, M_2)>0$ and $2\beta/\alpha>\gamma M_2/4\pi+1$ is contained in the strip $M_1 < \underline{M}$ as defined in  (\ref{barMdef}). By  part (b) of  the Theorem \ref{radbd} we observe that, indeed, $\overline{\Lambda}^R$ contains  the domain above the lower branch of $\Lambda=0$ in that strip.

 In the case $\gamma=0$, Theorems \ref{blow} and \ref{radbd} give an (almost) complete description. Indeed, in that case $\Lambda_2(M_1, M_2)=0$ iff $M_1=8\pi/\alpha$, so the conditions of Theorems \ref{blow} and \ref{radbd} are complementary.
\section{Proofs}\label{secprof}
Without any limitation of  generality we may assume that $\Omega :=D$ is the unit disk  $\{|x|\leq 1\}$.  Denote $(f,g):=\int_{D} f(x)g(x)dx$ for a pair of integrable functions $f,g$ on $D$.
\begin{proof} of Theorem \ref{blow}: \\
Let
  $\rho=\rho(|z|)$. For $\psi\geq 1$ set $\rho^\psi(r)= \psi^2\rho(\psi r)$ if $r\in[0, 1/\psi]$, $\rho^\psi=0$ if $r\in (1/\psi, 1]$.   Then, for $w\in\mathbb{H}_0^1$ and $M>0$:
$$w_M^\psi(r):=\left\{\begin{array}{cc}
w(\psi r)-(2\pi)^{-1}M\ln (1/\psi) &   \ \text{if} \ 0\leq  r\leq 1/\psi   \\
 -(2\pi)^{-1} M\ln (r) &   \  \text{if} \ 1\geq  r\geq 1/\psi
                                  \end{array}\right.$$

             Note that
under this scaling $\Delta^{-1} \rho^\psi=-w_M^\psi$ if $\Delta w=-\rho$ and $\rho\in\Gamma_M$.  Also  $w_M^\psi\in \mathbb{H}_0^1$ for any $M>0$  if $w\in \mathbb{H}_0^1$.
We obtain for $\rho\in\Gamma_{M_1}$
\be\label{entropalpha}\int_{D}\rho^\psi\ln\rho^\psi = 2M_1\ln\psi + \int_{D}\rho\ln\rho \ . \ee
\be\label{ekalpha}(\Delta^{-1}\rho^\psi, \rho^\psi ) = (\Delta^{-1}\rho, \rho) -\frac{M_1^2}{2\pi} \ln\psi  \ , \ee
and for $w\in\mathbb{H}_0^1$:
\be\label{w} \int_{D} |\nabla w_{M_2}^\psi|^2 = \int_{D} |\nabla w|^2+ \frac{M^2_2}{2\pi}\ln\psi \ . \ee
In addition
\begin{multline}\label{logalpha} \int_{D} e^{\beta u_{M_1}^\psi-\gamma w_{M_2}^\psi}= 2\pi \left(e^{(\gamma M_2-\beta M_1)\ln(1/\psi)/2\pi}\int_0^{1/\psi}r e^{\beta u(\psi r)-\gamma w(\psi r)}+\int_{1/\psi}^1 r^{1+(\gamma M_2-\beta M_1) /2\pi}dr\right) \\
= \left[2\pi \int_0^1r e^{\beta u( r)-\gamma w(r) }+ O(1)\right]\psi^{-2+(\beta M_1-\gamma M_2)/2\pi }+ O(1)\end{multline}
It follows from (\ref{F^M}, \ref{entropalpha}-\ref{logalpha}) that if $-2+(\beta M_1-\gamma M_2)/(2\pi)> 0$ (i.e. $\Lambda_1(M_1, M_2)> 0$) then
\begin{multline}\label{Falpha} F^{M_2}(\rho^\psi, w_{M_2}^\psi)=F^{M_2}(\rho, w) + O(1)+ \\   \left[ 2(M_1-M_2)-\frac{\alpha M_1^2}{4\pi}+\frac{\beta M_2M_1}{2\pi}-\frac{\gamma M_2^2}{4\pi}\right]\ln\psi
\equiv F^{M_2}(\rho,w) + O(1)+ \Lambda(M_1, M_2)\ln\psi\end{multline}
 Letting $\psi\rightarrow\infty$ we obtain a blow-down sequence  for $F^{M_2}(\rho^\psi, w^\psi)$ where $(\rho^\psi, w^\psi)\in\Gamma_{M_1}\times \mathbb{H}_0^1$, provided $\Lambda(M_1, M_2)<0$.

If, on the other hand,  $\Lambda_1(M_1, M_2) <0$ then $F^{M_2}(\rho^\psi, w^\psi_{M_2})=$
\be F^{M_2}(\rho,w) +\left(2M_1-\frac{\alpha M_1^2}{4\pi}+\frac{\gamma M_2^2}{4\pi}\right)\ln\psi + O(1)\equiv
F^{M_2}(\rho,w)+ \Lambda_2(M_1, M_2)\ln\psi+O(1) \label{Fbeta}\ee
and the same holds if $\Lambda_2(M_1, M_2)<0$.


\end{proof}
For the proof of Theorem \ref{radbd} for the case $\gamma>0$ we shall need the following auxiliary lemma:

 \begin{lemma}\label{lemw}
For $\psi\in(0,1)$, $\gamma>0$, let  $v=v(r)$  be a solution of
 \be\label{veq} r^{-1}(rv_r)_r+r^{-\beta M/2\pi}e^{-\gamma v}=0 \ \ ;  \ \ \psi\leq r\leq 1\ee
  satisfying $v_r\leq 0$ on the interval $[\psi, 1]$ and
 $v_r(1)=-M_2/2\pi$.
 If $\frac{ \beta M-\gamma M_2}{2\pi} -2>0$ then
 $$\lim_{\psi\rightarrow 0}\ln^{-1}\left(\frac{1}{\sqrt{\psi}}\right)\int_{\sqrt{\psi} }^1r\left| v_r\right|^2 dr = \left(\frac{M_2}{2\pi}\right)^2 \ . $$
  \end{lemma}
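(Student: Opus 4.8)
The plan is to analyze the ODE \eqref{veq} on $[\psi,1]$ directly, extracting the leading-order behaviour of $v_r$ as $\psi\to 0$ by separating the contribution of the forcing term near $r=1$ from its contribution near $r=\sqrt\psi$. First I would integrate \eqref{veq} once: writing $q(r):=\frac{ \beta M-\gamma M_2}{2\pi} -2>0$ and observing that $r^{-\beta M/2\pi}e^{-\gamma v}\geq 0$, we get
\be\label{lemw-int}
r v_r(r) = v_r(1) + \int_r^1 s^{-\beta M/2\pi}e^{-\gamma v(s)}\,ds = -\frac{M_2}{2\pi} + \int_r^1 s^{-\beta M/2\pi}e^{-\gamma v(s)}\,ds \ .
\ee
Since $v_r\leq 0$ on $[\psi,1]$, the function $v$ is non-increasing, so $v(s)\leq v(1)$, giving an upper bound on $e^{-\gamma v(s)}$; and from $v_r(1)=-M_2/2\pi$ together with \eqref{lemw-int} one controls $v$ from above on the whole interval, which in turn lower-bounds $e^{-\gamma v}$. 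The key point is that the integrand $s^{-\beta M/2\pi}e^{-\gamma v(s)}$ behaves, up to constants, like $s^{-2-q}$ times a bounded factor near the left endpoint, so that $\int_r^1 s^{-\beta M/2\pi}e^{-\gamma v(s)}\,ds$ is dominated by its lower limit once $r$ is small. More precisely I would show that there are constants $0<c\leq C<\infty$, independent of $\psi$, with $c\, r^{-1-q}\leq |v_r(r)|\leq C\, r^{-1-q}$ for $r$ in a left-neighbourhood of $\sqrt\psi$ — or rather on all of $[\sqrt\psi,\delta_0]$ for a fixed small $\delta_0$ — after first checking that the ``$-M_2/2\pi$'' term is negligible compared with the integral there (because $q>0$ forces the integral to blow up like $r^{-q}$ as $r\downarrow 0$).

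Next I would compute $\int_{\sqrt\psi}^1 r|v_r|^2\,dr$. Split the interval as $[\sqrt\psi,\delta_0]\cup[\delta_0,1]$. On $[\delta_0,1]$ the integral is bounded uniformly in $\psi$, hence contributes $O(1)$, which is killed by the $\ln^{-1}(1/\sqrt\psi)$ prefactor. On $[\sqrt\psi,\delta_0]$ I use \eqref{lemw-int} to write $r|v_r|^2 = r^{-1}(rv_r)^2$ and expand $(rv_r)^2 = \big(\tfrac{M_2}{2\pi}\big)^2 - \tfrac{M_2}{\pi}\int_r^1 s^{-\beta M/2\pi}e^{-\gamma v}\,ds + \big(\int_r^1 s^{-\beta M/2\pi}e^{-\gamma v}\,ds\big)^2$. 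Dividing by $r$ and integrating over $[\sqrt\psi,\delta_0]$: the first term gives exactly $\big(\tfrac{M_2}{2\pi}\big)^2\ln(\delta_0/\sqrt\psi) = \big(\tfrac{M_2}{2\pi}\big)^2\ln(1/\sqrt\psi) + O(1)$, which is the claimed main term. It remains to check that the other two terms are $o\big(\ln(1/\sqrt\psi)\big)$. For the cross term, $\big|\int_r^1 s^{-\beta M/2\pi}e^{-\gamma v}\,ds\big|\leq C' r^{-q}$ on $[\sqrt\psi,\delta_0]$ with $C'$ uniform, so $\int_{\sqrt\psi}^{\delta_0} r^{-1}\cdot r^{-q}\,dr \leq C'' q^{-1}\psi^{-q/2}$; this is a power of $\psi$, not a logarithm — so I need to be more careful. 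Actually the cross and quadratic terms are what could dominate, so the real content is showing they do not.

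The main obstacle, therefore, is precisely this last point: a priori the term $\int_r^1 s^{-\beta M/2\pi}e^{-\gamma v}\,ds$ grows like $r^{-q}$, and $\int_{\sqrt\psi}^{\delta_0} r^{-1-q}\,dr \sim \psi^{-q/2}$ blows up polynomially, dwarfing $\ln(1/\sqrt\psi)$. The resolution must be that the hypotheses — $v_r(1)=-M_2/2\pi$ is the \emph{total} mass of the forcing on $[\psi,1]$, i.e. $\int_\psi^1 s^{-\beta M/2\pi}e^{-\gamma v(s)}\,ds$ stays bounded (this is forced by letting $r\to\psi$ in \eqref{lemw-int} and using $v_r(\psi)\leq 0$, which gives $\int_\psi^1 s^{-\beta M/2\pi}e^{-\gamma v}\,ds \leq \frac{M_2}{2\pi}\cdot\frac{1}{\psi}\cdot\psi$... ). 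The correct bookkeeping is: $\int_\psi^1 s^{-\beta M/2\pi}e^{-\gamma v(s)}ds = \psi v_r(\psi) + \frac{M_2}{2\pi}$, and since $v_r(\psi)\le 0$ this integral is at most $M_2/2\pi$, hence \emph{uniformly bounded in $\psi$}. Consequently $\big|\int_r^1 s^{-\beta M/2\pi}e^{-\gamma v}\,ds\big|\leq \frac{M_2}{2\pi}$ for all $r\in[\psi,1]$, so $|rv_r(r)|\leq \frac{M_2}{\pi}$ uniformly, and now the cross term contributes at most $\frac{M_2}{\pi}\cdot\frac{M_2}{2\pi}\int_{\sqrt\psi}^{\delta_0} r^{-1}\big|\!\int_r^1\! s^{-\beta M/2\pi}e^{-\gamma v}ds\big|\,dr$, while the quadratic term is at most $\big(\tfrac{M_2}{\pi}\big)^2\int_{\sqrt\psi}^{\delta_0} r^{-1}\big|\!\int_r^1\! s^{-\beta M/2\pi}e^{-\gamma v}ds\big|\,dr$. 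So everything reduces to showing
\be\label{lemw-key}
\int_{\sqrt\psi}^{\delta_0} \frac{1}{r}\left(\int_r^1 s^{-\beta M/2\pi}e^{-\gamma v(s)}\,ds\right)dr = o\!\left(\ln\frac{1}{\sqrt\psi}\right)\ ,
\ee
for which I would exchange the order of integration (Fubini): the left side equals $\int_{\sqrt\psi}^1 s^{-\beta M/2\pi}e^{-\gamma v(s)}\ln\!\big(\min(s,\delta_0)/\sqrt\psi\big)^+\,ds$, and since $\int_\psi^1 s^{-\beta M/2\pi}e^{-\gamma v}\,ds$ is bounded while $\ln(s/\sqrt\psi)\leq \ln(1/\sqrt\psi)$, I get a bound $\leq \frac{M_2}{2\pi}\ln(1/\sqrt\psi)$ — not yet $o(\cdot)$. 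To get the little-$o$ I would localize: split the $s$-integral at $s=\psi^\kappa$ for a suitable $\kappa\in(0,1/2)$; for $s\geq\psi^\kappa$ the log factor is $O(\ln(1/\psi))$ but the tail mass $\int_{\psi^\kappa}^1 s^{-\beta M/2\pi}e^{-\gamma v}ds\to 0$ (it is part of a convergent-to-bounded integral whose left-endpoint contribution carries all the weight), and for $s<\psi^\kappa$ the mass is $\leq M_2/2\pi$ but the log factor is $O((\tfrac12-\kappa)^{-1}\cdot\text{small})$... This dyadic/scaling estimate on where the mass of $s^{-\beta M/2\pi}e^{-\gamma v(s)}ds$ concentrates is the technical heart, and it is exactly the kind of estimate that must interface with the rescaling used elsewhere in Section \ref{secprof}; I expect to need the monotonicity $v_r\leq 0$ one more time here to pin down $e^{-\gamma v(s)}$ from both sides and conclude \eqref{lemw-key}, after which the Lemma follows by collecting terms.
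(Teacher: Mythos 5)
Your route---integrating \eqref{veq} once and trying to control the first integral $I(r):=\int_r^1 s^{\,1-\beta M/2\pi}e^{-\gamma v(s)}\,ds$ directly---is genuinely different from the paper's, and the reduction you perform is sound modulo a slip: since $(rv_r)_r=-r^{\,1-\beta M/2\pi}e^{-\gamma v}$, the integrand carries the exponent $1-\beta M/2\pi$, not $-\beta M/2\pi$; relatedly, your opening claim $|v_r|\asymp r^{-1-q}$ is incompatible with the bound $|rv_r|\le M_2/2\pi$ that you correctly derive later from $\psi v_r(\psi)\le 0$. The genuine gap is the last step. After Fubini, what you must show is that $I(\sqrt{\psi})=\int_{\sqrt{\psi}}^1 s^{\,1-\beta M/2\pi}e^{-\gamma v}\,ds\to 0$ (or at least that $I(\psi^{\kappa})\to0$ for every fixed $\kappa<1/2$, which the lower bound $\int_{\sqrt\psi}^{1}r^{-1}I\,dr\ge \tfrac14 I(\psi^{1/4})\ln(1/\psi)$ forces anyway). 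But $\sqrt{\psi}\,v_r(\sqrt{\psi})=-\tfrac{M_2}{2\pi}+I(\sqrt{\psi})$ and $I$ is monotone, so this statement is \emph{equivalent} to the uniform convergence $rv_r\to -M_2/2\pi$ on $[\sqrt{\psi},1]$, i.e.\ it \emph{is} the content of the lemma; you have reduced the lemma to itself. Moreover the information you have actually extracted does not close it: from $-\tfrac{M_2}{2\pi r}\le v_r\le 0$ one gets only $e^{-\gamma v(1)}s^{\gamma M_2/2\pi}\le e^{-\gamma v(s)}\le e^{-\gamma v(1)}$; feeding the lower bound into $\int_\psi^1\le M_2/2\pi$ yields $e^{-\gamma v(1)}=O(\psi^{q})$, and the upper bound then gives $I(\sqrt{\psi})=O\bigl(\psi^{(q-\gamma M_2/2\pi)/2}\bigr)$. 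This tends to $0$ only under the extra restriction $q>\gamma M_2/2\pi$, strictly stronger than the hypothesis $q>0$; in the complementary regime these bounds do not self-improve, and the ``split at $s=\psi^{\kappa}$'' step is asserted rather than proved.

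The paper sidesteps this by solving the ODE essentially exactly: the substitution $r=e^{-t}$ followed by subtraction of the linear drift $\gamma^{-1}(\beta M/2\pi-2)t$ turns \eqref{veq} into the autonomous equation $\bar v_{tt}+e^{-\gamma\bar v}=0$, whose conserved energy $E=|\bar v_t|^2/2-\gamma^{-1}e^{-\gamma\bar v}$ allows explicit integration; the boundary data determine $E$ through a $\coth$ relation, and as $\psi\to0$ one reads off from the closed-form solution that $\bar v_t$ (hence $rv_r$) is uniformly close to its limiting value on the first half $[0,\ln(1/\sqrt{\psi})]$ of the $t$-interval --- exactly the concentration statement your argument is missing. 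To salvage your approach you would need a substitute for this, e.g.\ a bootstrap over intermediate scales that successively improves the exponent in the two-sided bound on $e^{-\gamma v}$; as written, the proposal does not prove the lemma.
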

  \begin{remark}\label{remgamma0}
  Note that we cannot give up the condition $\gamma>0$. Indeed, if $\gamma=0$ then the solution of (\ref{veq}) does not satisfy $v_{r}\leq 0$ on $[\psi,1]$ under the stated condition $\beta M>4\pi$, if $\psi>0$ is small enough.
  \end{remark}
  \begin{proof}

 Under the change of variables:
 $r\rightarrow e^{-t}$ we get that (\ref{veq}) is transformed to
 $$ \hat{v}_{tt}+ e^{(\beta M/ 2\pi -2)t -\gamma\hat{v}}=0 \ \ \ , \ \ \ 0\leq t\leq \ln(1/\psi)$$
  for $\hat{v}(t):=v(e^{-t})$.  The end point $r=1$  are transformed into $t=0$ and
  $$ \hat{v}_t(0)= M_2/ 2\pi $$
 Setting now \be\label{barhat}\bar{v}(t):= \hat{v}(t) -\gamma^{-1} (\beta M/(2\pi) -2)t\ee
  we get
\be\label{vbar} \bar{v}_{tt}+ e^{-\gamma\bar{v}}=0\ee
and
  \be\label{yomhadash} \bar{v}_t(0)=-\gamma^{-1}\left[\frac{ \beta M-\gamma M_2}{2\pi} -2\right]  \ . \ee
    From (\ref{vbar}) it follows that $|\bar{v}_t|^2/2 -\gamma^{-1}e^{-\gamma\bar{v}} :=E$ is an invariant, so
  $$ \bar{v}_t=\pm\sqrt{2(E+ \gamma^{-1}e^{-\gamma\bar{v}})} \ . $$
  for some constant $E$.  The assumption $\frac{ \beta M-\gamma M_2}{2\pi} -2>0$ and (\ref{yomhadash}) imply the $-$ sign above, so
  \be\label{bu}   \bar{v}_t=-\sqrt{2(E+ \gamma^{-1}e^{-\gamma\bar{v}})} \ee
%
  The exact solution of (\ref{bu}) which is defined on the interval $[0, \ln(1/\psi))$  and blows down at $t=\ln(1/\psi)$ is
  \be\label{exactsol} \bar{v}(t)=-\gamma^{-1}\ln(4E\gamma) -\sqrt{2E}(t+\ln(\psi)) +\frac{2}{\gamma}\ln\left(1-e^{\sqrt{2E}\gamma( t+\ln(\psi))}\right) \ . \ee
 Substitute  condition (\ref{yomhadash}) in this solution implies 
 $$\sqrt{2E} \coth \left(\sqrt{\frac{E}{2}}\gamma \ln(1/\psi)\right)= \gamma^{-1}\left[\frac{ \beta M-\gamma M_2}{2\pi} -2\right] \ . $$
  In the limit $\psi\rightarrow 0$ we get $\sqrt{2E}\rightarrow \gamma^{-1}\left[\frac{ \beta M-\gamma M_2}{2\pi} -2\right] $. From (\ref{exactsol}) we obtain that $\bar{v}_t$ converges uniformly on $[0, \ln(1/\sqrt{\psi})]$ to
  $-\gamma^{-1}\left[\frac{ \beta M-\gamma M_2}{2\pi} -2\right]$,  as $\psi\rightarrow 0$,

  Hence, by  (\ref{barhat}), we get that $\hat{v}_t$ converges uniformly on $[0, \ln(1/\sqrt{\psi})]$ to $M_2/2\pi$.

  Returning to the variable $r=e^{-t}$, recalling $v(r):= \hat{v}(-\ln(r))$ we get that for any solution $v$ of (\ref{veq}) which satisfies the conditions of the lemma,  the function $rv_r$ converges uniformly on $[\sqrt{\psi},1]$ to $-M_2/2\pi$, as $\psi\rightarrow 0$.
  Hence
  $$\int_{\sqrt{\psi}}^1 r|v_r|^2 dr= \int_{\sqrt{\psi}}^1 r^{-1}(r|v_r|)^2 dr = (M_2/2\pi)^2\ln(1/\sqrt{\psi})+o(\ln(1/\psi)) \  $$
  at  $\psi<<1$.
\end{proof}
\begin{proof} of Theorem \ref{radbd} \\
The bound of $\bar{F}^{M_2}$ from below on $\Gamma_{M_1}$ for $M_1\leq 8\pi/\alpha$ follows from the bound from below of $F$ on $\Gamma_{8\pi/\alpha}$. See (\ref{newF}) and the discussion below (\ref{IgradF0}). This concludes the first alternative of the Theorem. We assume, from now on, that $M_1\geq 8\pi/\alpha$. 
\par
Assume alternative (a), i.e.  $\Lambda(M_1, M_2)>0$,  $2\beta/\alpha>\gamma M_2/4\pi+1$ (so $\Lambda_1(8\pi/\alpha, M_2)>0$)  and $\gamma>0$.
 For the case $\gamma=0$ see Remark \ref{remgamma1} at the end of this proof.

   Note that  $\Lambda_2(8\pi/\alpha, M_2)\geq 0$.  Hence     $\Lambda(8\pi/\alpha, M_2):= \Lambda_1(8\pi/\alpha, M_2)+\Lambda_2(8\pi/\alpha, M_2)>0$ as well. Since $\Lambda(M_1, M_2)>0$ by assumption,   $s\rightarrow \Lambda_1(s, M_2)$ is linear and $s\rightarrow \Lambda(s, M_2)$ is concave, then  both $\Lambda(s, M_2)>0$ and $\Lambda_1(s, M_2)>0$ for $8\pi/\alpha \leq s \leq M_1$:
   \be\label{lambda1+} \Lambda_1(s, M_2)>0 \ \ \ \forall s\in[8\pi/\alpha, M_1] \ .  \ \ \ee
\par
Let $\delta:= (M_1-8\pi/\alpha )/n$ where $n$ is so large, for which
\be\label{con2} \min_{8\pi/\alpha\leq s\leq M_1} \Lambda(s, M_2)>  \frac{\alpha\delta M_1}{\pi}  \ . \ee
We shall prove that if $\bar{F}^{M_2}$ is unbounded from below
on $\Gamma^R_s$  where $s\in [8\pi/\alpha+\delta, M_1]$ then it is still unbounded from below on $\Gamma^R_{s-\delta}$.
Iterating this argument $n$ times  we obtain that $\bar{F}^{M_2}$ is unbounded from below on $\Gamma^R_{8\pi/\alpha}$ and get a contradiction.

So let $s$ in this interval and  $\{\rho_j\}\in \Gamma_s$  a blow down sequence (e.g.  $\bar{F}^{M_2}(\rho_j)<-j$). Choose $\psi_j\in (0,1)$ such that $\int_{D_{\psi_j}}\rho_j=s-\delta$.  Set
 $\underline{\rho}_j\in \Gamma_{s-\delta}$  which is the restriction of $\rho_j$ to $D_{\psi_j}:=\{ |x|\leq \psi\}$,  that is:
 $$ \urho_j(r)=\rho_j(r) \ \ \ \text{for} \ \ r\in[0,\psi_j] \ \ \ , \ \ \ \urho_j(r)=0 \ \ \ \text{for} \ \ r\in( \psi_j,1] \ . $$
 Our first step is to show
  \be\label{1estim} \bar{F}^{M_2}(\urho_j)\leq \bar{F}^{M_2}(\rho_j) + \pi e^{-1} - \frac{\alpha\delta s}{4\pi}\ln\psi_j \ . \ee
   Since the function $s\rightarrow -s\ln s$ is bounded from above by $e^{-1}$ it follows that
 \be\label{stepa}\int_{D}\urho_j\ln\urho_j-\int_{D}\rho_j\ln\rho_j =-\int_{D-D_{\psi_j}}\rho_j\ln\rho_j \leq \pi e^{-1} \ . \ee

 Since $\Delta^{-1}\leq 0$ on $D$ and $\urho_j\leq \rho_j$ then  $\Delta^{-1}\urho_j\geq \Delta^{-1} \rho_j$ (same reasoning can be applied via the maximum principle). Then, since $\beta>0$ we obtain  by (\ref{newH1})
  \be\label{stepb} \bar{H}^{M_2}_\gamma(\urho_j)  \leq \bar{H}^{M_2}_\gamma(\rho_j)  \ . \ee

  Finally, to estimate the difference  $(\urho_j, \Delta^{-1}\urho_j)-(\rho_j,\Delta^{-1}\rho_j)$ we observe

\be\label{stepc} (\rho_j, \Delta^{-1}\rho_j)-(\urho_j, \Delta^{-1}\urho_j)=  2(\rho_j-\urho_j, \Delta^{-1}\urho_j) +(\rho_j-\urho_j, \Delta^{-1}\rho_j-\Delta^{-1}\urho_j) \ . \ee
Since $\rho_j-\urho_j$ is supported in the ring $1\geq r\geq \psi_j$ we obtain from  (\ref{uug})
\be\label{stepd}2(\rho_j-\urho_j, \Delta^{-1}\urho_j) \geq \frac{s(s-\delta)}{2\pi}\ln\psi_j \ . \ee

 The second term
is the (negative) energy  due to a mass concentrated in the ring $r\in[\psi_j, 1]$.
It is maximized  if the mass is concentrated in the inner circle $r=\psi_j$. The potential  induced by the mass $\delta$  concentrated on this circle is just $\delta/(2\pi) \ln\psi_j$, so the energy is bounded from below by $\delta^2/(2\pi) \ln\psi_j$.
 Hence
  \be\label{stepe} (\rho_j, \Delta^{-1}\rho_j)-(\urho_j, \Delta^{-1}\urho_j)\geq \frac{\delta s}{2\pi}\ln\psi_j \ . \ee
  Summarizing  (\ref{stepa}-\ref{stepe}) and using (\ref{newF}) we obtain  (\ref{1estim}).
  \par
 Note that  potential $\uu_j=-\Delta^{-1}\urho_j$  satisfies
 \be\label{uug}\uu_j(r)=\frac{s-\delta}{2\pi}\ln\left(\frac{1}{r}\right) \ \ \ \text{for} \ \ 1\geq r\geq \psi_j \ . \ee
      Set  now $\urho^{\psi_j}(r)= \psi_j\urho(\sqrt{\psi_j} r)$ for $r\in[0,1]$. Note that $\urho_j^{\psi_j}$ is supported on the disc or radius $\sqrt{\psi}$.  Evidently, $\urho^{\psi_j}\in\Gamma_{s-\delta}$ as well.

  For $\urho_j^{\psi_j}$ as above,  (\ref{entropalpha}, \ref{ekalpha}) imply
\be\label{entropalpha1}\int_{D}\urho_j^{\psi_j}\ln\urho_j^{\psi_j} = 2M_1\ln\sqrt{\psi_j}+ \int_{D}\urho_j\ln\urho_j   \ee
\be\label{ekalpha1}(\Delta^{-1}\urho_j^{\psi_j}, \urho_j^{\psi_j} ) = (\Delta^{-1}\urho_j, \urho_j) -\frac{M_1^2}{2\pi} \ln\sqrt{\psi_j } \ . \ee
We obtained
\be\label{secondFestim} F(\urho_j^{\psi_j})=F(\urho_j) +\left(2M_1-\frac{\alpha M_1^2}{2\pi} \right)\ln\sqrt{\psi_j } \ . \ee
Next we estimate $\bar{H}_\gamma^{M_2}(\urho_j^{\psi_j})$ in terms of $\bar{H}_\gamma^{M_2}(\urho_j)$.

Set  $\uw_j\in\mathbb{H}^1_0(D)$ to be the solution of
\be\label{weqbar}\Delta \uw_j + \frac{M_2}{\int_D e^{\gamma \uw_j+\beta \uu_j}} e^{-\gamma \uw_j+\beta \uu_j} =0 \  . \ee
 Recall that $\uw_j$ is the minimizer of $H^{M_2}_\gamma(\urho_j,w)$ (see \ref{newH1})). In particular
 \be\label{koko} \bar{H}_\gamma^{M_2}(\urho_j)=H_\gamma^{M_2}(\urho_j, \uw_j) \ . \ee
 The function $\uw_j$  is a radial function  on the interval $[0,1]$. Indeed, it satisfies, as a function of $r$
 \be\label{rweqbar} r^{-1}(r \uw_{j,r})_r + \lambda e^{-\gamma \uw_j+\beta \uu_j} =0 \  , \ee
 where $\lambda>0$ is an appropriate constant verifying $\uw_{j,r}(1)=-M_2/(2\pi)$.  Since $\uw_{j,r}(0)=0$ it follows $\uw_{j,r}\leq 0$ on the entire interval $[0,1]$.
Using (\ref{uug}) in (\ref{weqbar}) we observe that $\uw_j$ is identified, up to an additive constant,  with a solution $v$ of  (\ref{veq}) on the annulus $\psi_j\leq r\leq 1$, where $M=s-\delta$.  Recalling (\ref{lambda1+}), we obtain from  Lemma \ref{lemw}
\be\label{ttt} \frac{\gamma}{2}\int_{\sqrt{\psi}\leq |x|\leq 1} |\nabla \uw_j|^2= \frac{\gamma M_2^2}{4\pi} \ln(1/\sqrt{\psi}) + o(|\ln\psi|) \ . \ee
The potential corresponding to $\urho_j^{\psi_j}$ is  $\uu_j^{\psi_j}(r)=\uu_j(\sqrt{\psi_j} r)+ ((s-\delta)/2\pi)\ln\sqrt{\psi_j}$. Define also
  $\uw_j^{\psi_j}(r)=\uw_j(\sqrt{\psi_j} r)-\uw_j(\sqrt{\psi_j})\in \mathbb{H}_0^1(D)$.
As in
 (\ref{logalpha} ) (with $\sqrt{\psi_j}$ replacing $\psi_j$) we obtain
\begin{multline} \int_{D} e^{\beta\uu_j^{\psi_j}-\gamma \uw_j^{\psi_j}}= 2\pi e^{(\beta (s-\delta)\ln\sqrt{\psi_j}/2\pi+\gamma \uw_j(\sqrt{\psi_j}))}\int_0^1re^{\beta\uu_j(\sqrt{\psi_j} r)-\gamma \uw_j(\sqrt{\psi_j} r)}dr \\ =2\pi e^{(\beta/(2\pi) (s-\delta)\ln\sqrt{\psi_j}+\gamma \uw_j(\sqrt{\psi_j})- 2\ln\sqrt{\psi_j})}\int_0^{\sqrt{\psi_j}}r e^{\beta\uu_j( r)-\gamma \uw_j(r)}dr \\
\leq 2\pi e^{(\beta (s-\delta)/(2\pi)\ln\sqrt{\psi_j}+\gamma \uw_j(\sqrt{\psi_j})- 2\ln\sqrt{\psi_j})}\int_0^1r e^{\beta\uu_j( r)-\gamma \uw_j(r)}dr \end{multline}
It follows that
\begin{multline}\label{logalpha1} M_2\ln\left(\int_{D} e^{\beta\uu_j^{\psi_j}-\gamma \uw_j^{\psi_j}}\right) \leq \\ M_2\ln\left(\int_{D} e^{\beta \uu_j-\gamma \uw_j}\right)
+M_2\left\{\left[ \beta (s-\delta)/(2\pi)-2\right]\ln\sqrt{\psi_j}+\gamma  \uw_j(\sqrt{\psi_j})) \right\} \ .
\end{multline}
Recall that $\uw_j$ is the solution of (\ref{rweqbar}) satisfying $\uw_{j}(1)=0, \uw_{j,r}(1)=-M_2/(2\pi)$. In particular $(r\uw_{j,r})_r<0$ on $(0,1]$.   It follows that $\uw_j(r)\leq - (M_2/2\pi) \ln(r)$ for any $r\in[0,1]$.  Hence
 $$ [\beta (s-\delta)/(2\pi)-2]\ln\sqrt{\psi_j}+\gamma w_j(\sqrt{\psi_j})) \leq  \left(\frac{\beta(s-\delta)}{2\pi} -\frac{\gamma M_2}{2\pi}-2\right)\ln\sqrt{\psi_j} \ , $$
so
\be\label{ss1}M_2\ln\left(\int_{D} e^{\beta\uu_j^{\psi_j}-\gamma \uw_j^{\psi_j}}\right) \leq  M_2\ln\left(\int_{D} e^{\beta\uu_j-\gamma \uw_j}\right) +M_2\left( \frac{\beta(s-\delta)}{2\pi} -\frac{\gamma M_2}{2\pi}-2\right)\ln\sqrt{\psi_j} \ . \ee

Next,
\begin{multline}\label{ss2} \frac{\gamma}{2}\int_{D}\left|\nabla\uw_j^{\psi_j}\right|^2 = \pi\gamma\int_0^1\left|\frac{d\uw_j^{\psi^j}}{dr}\right|^2 rdr
= \pi\gamma\int_0^1 \left| \uw_j\prime(\sqrt{\psi_j} r) \right|^2 \psi_jrdr=   \\ \pi\gamma\int_0^{\sqrt{\psi_j}} \left| \uw_j\prime( r) \right|^2 rdr=  \pi\gamma\int_0^{1} \left| \uw_j\prime( r) \right|^2 rdr- \gamma\pi\int_{\sqrt{\psi_j}}^1\left| \uw_j\prime( r) \right|^2 rdr   =  \\ \frac{\gamma}{2}\int_{D}\left|\nabla \uw_j\right|^2+\frac{\gamma M_2^2}{4\pi} \ln\sqrt{\psi_j} + o(|\ln\psi_j|) \end{multline}
where we used   (\ref{ttt}) in the last equality.

From (\ref{newH1}, \ref{koko}, \ref{ss1}, \ref{ss2}) we obtain
\begin{multline} \label{multstut}\bar{H}_\gamma^{M_2}(\urho_j^{\psi_j})\leq H_\gamma^{M_2}(\urho_j^{\psi_j}, \uw_j^{\psi_j})\leq H_\gamma^{M_2}(\urho_j, \uw_j)
+ M_2\left( \frac{\beta(s-\delta)}{2\pi} -\frac{\gamma M_2}{4\pi}-2\right)\ln(\sqrt{\psi_j})+ o(\ln(\psi_j)) \\ =\bar{H}_\gamma^{M_2}(\urho_j)
+ M_2\left( \frac{\beta(s-\delta)}{2\pi} -\frac{\gamma M_2}{4\pi}-2\right)\ln(\sqrt{\psi_j}) + o(\ln(\psi_j))\end{multline}
This and (\ref{secondFestim}), together with  the definition (\ref{gammadef}) of $\Lambda$, imply
 $$ \bar{F}^{M_2}(\urho_j^{\psi_j})\leq \bar{F}^{M_2}(\urho_j) +\left[\Lambda(s-\delta, M_2) +o(1)\right]\ln\sqrt{\psi_j} \ . $$
Using this in (\ref{1estim}):
 $$ \bar{F}^{M_2}(\urho_j^{\psi_j})\leq \bar{F}^{M_2}(\rho_j) +\left( \Lambda(s-\delta, M_2)-\frac{\alpha\delta s}{\pi} +o(1)\right)  \ln\sqrt{\psi_j} +\pi e^{-1}\ . $$
 Since $\psi_j\in (0,1)$ it follows by (\ref{con2}) that $F^{M_2}(\urho^{\psi_j}_j)\leq F^{M_2}(\rho_j) +\pi e^{-1}$, as long as $s-\delta\leq M_1$. In particular $\lim_{j\rightarrow\infty} F^{M_2}(\urho_j^{\psi^j})=-\infty$ if $\lim_{j\rightarrow\infty} F^{M_2}(\rho_j)=-\infty$.  Recalling $\rho_j\in\Gamma_s$  while $\urho_j^{\psi_j}\in\Gamma_{s-\delta}$ we obtain the desired result by $n$ iteration, as explained below (\ref{con2}).
\begin{remark}\label{remgamma1}
In the case $\gamma=0$, (\ref{ss2}) is reduced to the trivial identity $0=0$, while $H_0^{M_2}(\rho,w)$ is independent on $w$, so we may take $\uw_j=0$ and $\bar{H}_0^{M_2}(\urho_j)=H_0^{M_2}(\urho_j,0)$. The inequality (\ref{multstut}) holds with this substitution and the result follows as well. Note that we do not apply Lemma \ref{lemw} in that case.
\end{remark}
We finally turn to the proof of part (b):  By (\ref{newH1})
$$ H_\gamma^{M}(\rho,w)-H_\gamma^{M_2}(\rho,w)= (M-M_2)\ln\left( \int e^{-\gamma w-\beta\Delta^{-1}(\rho)}\right) \geq (M-M_2)\ln\left( \int e^{-\gamma w}\right)$$
since $\Delta^{-1}\rho\leq 0$ and $M\geq M_2$. If $\gamma=0$ then (\ref{newH1}, \ref{newF}) imply that $F^{M}$ is bounded from below on $\Gamma_{M_1}$ is $F^{M_2}$ is. Otherwise, Jensen, Poincare and Caushy-Schwartz inequalities imply the existence of a constant $C(\eps)>0$ such that
$$ (M-M_2)\ln\left( \int e^{-\gamma w}\right)\geq -\eps\|\nabla w\|_2^2- C(\eps)$$ 
for any $\eps>0$ and $w\in\mathbb{H}_0^1$.  Hence
$$ H^{M}_\gamma(\rho,w)\geq H^{M_2}_\gamma(\rho,w)-\eps \|\nabla w\|_2^2 -C(\eps) \  $$
for any $(\rho,w)\in \Gamma_{M_1}\times \mathbb{H}_0^1$. Scaling $w \mapsto \gamma w$ we obtain from (\ref{newH1}) 
$$ \bar{H}^M_\gamma (\rho)\geq  \bar{H}^{M_2}_{\hat{\gamma}}(\rho)-C(\eps)$$
where $\hat{\gamma}:= \frac{\gamma}{1-2\eps/\gamma}$.  From this and (\ref{newF}) we obtain that 
$\bar{F}_\gamma^{M}:= F+ \bar{H}_\gamma^M$
is bounded from below on $\Gamma_{M_1}$ if $\bar{F}^{M_2}_{\hat{\gamma}}:= F+ \bar{H}_{\hat{\gamma}}^{M_2}$ is. 
Since the conditions of (a), determined by strong inequalities, are preserved under the change $\gamma \mapsto \hat{\gamma}$ for $\eps>0$ small enough, we obtain the result. 
  \end{proof}

\begin{center}References\end{center}
\begin{enumerate}
\item\label{[B]}  Beckner, W.: {\it Sharp Sobolev inequalities on the sphere and the Moser–Trudinger inequality}. Ann. of Math. (2) 138, 213-242 (1993)
\item\label{[BCC]} Blanchet, A.,  Carlen, E.A. and  Carrillo, J.A.: {\it Functional inequalities, thick tails and asymptotic for the critical mass Patlak-Keller-Segel model} J. Funct. Anal. 262 , no. 5, 2142-2230, (2012)
\item\label{[Bi]}  Biler, P.,  Karch, G.:  {\it Blowup of solutions to generalized Keller-Segel model},  J. Evol. Equ. 10, no. 2, 247-262,  (2010)
\item\label{[CC]}  Calvez, V., Corrias, L.  {\it The parabolic-parabolic Keller-Segel model in $R^2$}, Commun. Math. Sci. 6, 417-447 (2008)
\item\label{[CL]} Carlen, E., Loss, M.: {\it Competing symmetries, the logarithmic HLS inequality and Onofri’s inequality on $S^n$}. Geom. Funct. Anal. 2, 90-104 (1992)
\item\label{[CSW]}  Chipot, M.; Shafrir, I.; Wolansky, G. {\it On the solutions of Liouville systems}. J. Differential Equations 140 , no. 1, 59-105, (1997)
\item\label{[H]} Horstmann, Dirk. {\it Generalizing the Keller-Segel model: Lyapunov functionals, steady state analysis, and blow-up results for multi-species chemotaxis models in the presence of attraction and repulsion between competitive interacting species}. J. Nonlinear Sci. 21 , no. 2, 231-270,  (2011)
   \item\label{[KS]} Keller, E. F. and  Segel, L. A.  {\it Traveling bends of chemotactic bacteria}. J. Theor. Biol. 30, 235-248 (1971)
   \item\label{[KS1]}  Kavallaris, N.,   Suzuki, T.: {\it  On the finite-time blow-up of a non-local parabolic equation describing chemotaxis}. Differential Integral Equations 20 , no. 3, 293-308, (2007)
    \item\label{[L]}   Lin, C.S: {\it Liouville systems of mean field equations}, Milan J. Math. 79, no. 1, 81-94,  (2011)
  \item\label{[RS]}    Ricciardi, T. and Suzuki, T.:  {\it Duality and best constant for a Trudinger–Moser inequality involving probability measures}. J. Eur. Math. Soc. (JEMS) 16, no. 7, 1327-1348, (2014)
  \item\label{[SW]}  Shafrir, I and  Wolansky, G.:  {\it The logarithmic HLS inequality for systems on compact manifolds.} J. Funct. Anal. 227 , no. 1, 200-226, (2005)
  \item\label{[S]}  Suzuki,T.:  {\it Free Energy and Self-Interacting Particles}, Birkh\"{a}user Boston, Boston, 2005.
   \item\label{[Wa]}  Wang, G.:  {\it Moser-Trudinger inequalities and Liouville systems}. C. R. Acad. Sci. Paris Sér. I Math. 328, no. 10, 895-900, (1999)
   \item\label{[W1]}Wolansky, G. {\it Multi-components chemotactic system in the absence of conflicts.}
European J. Appl. Math. 13 (2002), no. 6, 641-661
    \item\label{[W2]} Wolansky, G. {\it A critical parabolic estimate and application to nonlocal equations arising in chemotaxis}. Appl. Anal. 66, no. 3-4, 291-321, (1997)
 \item\label{[W3]} Wolansky, G.: {\it On the evolution of self-interacting clusters and applications to semilinear equations with exponential nonlinearity}, Festschrift on the occasion of the 70th birthday of Shmuel Agmon. J. Anal. Math. 59, 251-272, (1992)
 \item\label{[W4]} Wolansky, G.: {\it On steady distributions of self-attracting clusters under friction and fluctuations},  Arch. Rational Mech. Anal. 119, no. 4, 355-391, (1992)
\end{enumerate}
 \end{document}